\newtheorem{theorem}{Theorem}
\newtheorem{proposition}{Proposition} 
\newtheorem{lemma}{Lemma}
\newtheorem{corollary}[theorem]{Corollary}
\theoremstyle{definition}
\newtheorem{assumption}{Assumption}
\newtheorem{definition}{Definition}
\newcommand{\edit}[1]{{#1}}
\newcommand{\m}[1]{\mathbb{#1}}
\def \Re {\mathbb{R}}
\def \E { \mathbb{E}}
\def \proj {\mathcal{P}_{\mathcal{W}} }
\newcommand{\f}[2]{\frac {#1}{#2}}
\newcommand{\norm}[1]{\left\lVert#1\right\rVert}
\newcommand{\fold}[1]{\text{fold}\left(#1\right)}
\newcommand{\unfold}[1]{\text{unfold}\left(#1\right)}
\newcommand{\bcirc}[1]{\text{bcirc}\left(#1\right)}
\newcommand{\bdiag}[1]{\text{bdiag}\left(#1\right)}
\newcommand{\cA}{{\cal A}}
\newcommand{\ctA}{{\cal {\tilde A}}}
\newcommand{\cB}{{\cal B}}
\newcommand{\cC}{{\cal C}}
\newcommand{\cD}{{\cal D}}
\newcommand{\cE}{{\cal E}}
\newcommand{\cH}{{\cal H}}
\newcommand{\cI}{{\cal I}}
\newcommand{\cP}{{\cal P}}
\newcommand{\cQ}{{\cal Q}}
\newcommand{\cR}{{\cal R}}
\newcommand{\cT}{{\cal T}}
\newcommand{\cX}{{\cal X}}
\newcommand{\cY}{{\cal Y}}
\newcommand{\cW}{{\cal W}}
\newcommand{\mA}{{\bf A}}
\newcommand{\mH}{{\bf H}}
\newcommand{\mX}{{\bf X}}
\newcommand{\mY}{{\bf Y}}
\newcommand\numberthis{\addtocounter{equation}{1}\tag{\theequation}}
\title{Stochastic Gradient Descent for Incomplete Tensor Linear Systems}
\author[1]{\fnm{Anna} \sur{Ma}} \email{anna.ma@uci.edu}
\author[2]{\fnm{Deanna} \sur{Needell}}\email{deanna@math.ucla.edu}
\author*[2]{\fnm{Alexander} \sur{Xue}}\email{alexxue@math.ucla.edu
}
\affil[1]{University of California, Irvine, Department of Mathematics}
\affil[2]{University of California, Los Angeles, Department of Mathematics}
\abstract{
    Solving large tensor linear systems poses significant challenges due to the high volume of data stored, and it only becomes more challenging when some of the data is missing. Recently, Ma et al. showed in~\cite{Ma2025Stocha} that this problem can be tackled using a stochastic gradient descent-based method, assuming that the missing data follows a uniform missing pattern. We adapt the technique by modifying the update direction, showing that the method is applicable under other missing data models. We prove convergence results and experimentally verify these results on synthetic data.
}
\keywords{tensor recovery, t-product, missing data, stochastic gradient descent}
\pacs[MSC Classification]{65F10, 15A69, 65K10}
\begin{document}
\maketitle
\section{Introduction} 
In recent years, the rapid growth of data-driven applications in fields such as computer vision, signal processing, and machine learning has led to a growing demand for models and algorithms capable of efficiently handling high-dimensional data. Traditional matrix-based approaches often fall short in capturing the intrinsic multi-way relationships present in such datasets. As a result, \textit{tensors}, which are multidimensional generalizations of matrices, have emerged as a natural framework for representing and manipulating this high-dimensional data.

Tensor methods have shown significant promise in various machine learning applications, including deep learning~\cite{Novikov2015} and recommender systems~\cite{Karatzoglou2010}. In these settings, some high-dimensional data can be represented by a tensor with no expressivity power loss. At the same time, tensors help with efficiency and compactness via various methods like tensor decomposition and compression. 

Recently, iterative methods involving tensors have been studied. These aim to solve tensor linear systems under the \textit{t-product}~\cite{KILMER2011641}.
For instance,~\cite{tang2023sketch} adapts the sketch-and-project framework to tensors,~\cite{luo2024frontal} introduces a novel frontal slice descent method, and~\cite{castillo2024randomized} modifies the Randomized Kaczmarz method to the tensor case to solve a tensor system with factorized operators. As can be seen from these examples, the $t$-product enables concepts from linear algebra in the matrix case to be adapted to the tensor case. Accordingly, the t-product has seen a wide array of applications, including in computer vision~\cite{yin2019subspaceclustering}, neural networks~\cite{newman2018stabletensorneuralnetworks}, and data completion~\cite{hu2017videocompletion}, among many others.

In this paper, we consider the problem of solving linear systems for third-order tensors, assuming that some of the data in a tensor is missing. This is a common problem in applications where, e.g., the data stems from inactive sensors, incomplete survey data, collaborative filtering, or memory needs. 

Our main contributions can be summarized as follows.

\begin{enumerate}
    \item We develop a new stochastic gradient descent-based algorithm to solve incomplete tensor linear systems under the t-product.

    \item Our results generalize the results in~\cite{ma2019stochasticgradientdescentlinear} from the matrix case to the tensor case. In doing so, we highlight the nontrivial distinctions one must make in the tensor setting, both algorithmically, analytically, and in terms of the noise models. 
    The recent paper~\cite{Ma2025Stocha} also tackles the tensor case, but we present a general approach that is applicable to a variety of missing data models, whereas their method is specifically tailored to a particular missing data model.
    
    \item We introduce three missing data models for which our proposed approach applies, providing rigorous error estimates for these models under both constant and varying step sizes.
\end{enumerate}

\textbf{Organization.} The rest of this section is dedicated to introducing tensors and the problem that we are studying. Section~\ref{sec:sgd} discusses the algorithm that we develop for the problem and gives two convergence results, Section~\ref{sec:convergence} proves the results and showcases the results for three specific missing data models, and Section~\ref{sec:experiments} empirically verifies the theoretical results on synthetic data.

\subsection{Background on Tensors}

\textbf{Notation.} Tensors are denoted with calligraphic letters, matrices are denoted with boldface capital letters, and vectors are denoted with lowercase letters. We focus on real third-order tensors in this paper. The $(i, j, k)$th element of a tensor $\cA \in \m R^{m \times \ell \times n}$ is denoted by $\cA_{ijk}.$ We use zero-indexing in this paper -- thus $0 \le i < m, 0 \le j < \ell,$ and $0 \le k < n$ in the previous sentence. \textit{Slices} of tensors are sub-tensors obtained by holding one index fixed, e.g. $\cA_{i::}$, $\cA_{:j:},$ and $\cA_{::k}$ denote the $i$th row slice, $j$th column slice, and $k$th frontal slice of $\cA$, respectively.

Since it will be convenient to sometimes view the frontal slices as matrices, we use the bold capital letter notation $\mA_k$ to do so.
Frequently, \edit{we will be examining sub-tensors of a row slice $\cA_{i::} \in \mathbb{R}^{1 \times \ell \times n}$. Note that the $k^{th}$ frontal slice of $\cA_{i::}$, i.e., $\cA_{i:k} \in \mathbb{R}^{1 \times \ell \times 1}$ is a vector. To simplify the notation, assuming a fixed $i \in [m]$, we denote the $k$th frontal slice of $\cA_{i::} $ as a vector via the lower case notation $a_k.$ That is, $a_k$ is the vector in $\m R^{\ell}$ whose $j$th entry is $\cA_{ijk}$, for $0 \le j < \ell.$}

\begin{definition} [Tensor operations] 
Let $\cA \in \m R^{m \times \ell \times n}$.
Let $\bcirc{\cA} \edit{\in \m R^{mn \times \ell n}}$ be the block-circulant matrix created from the frontal slices of $\cA$. That is,
\[ \bcirc{\cA} := \begin{pmatrix} \mA_0 & \mA_{n-1} & \mA_{n-2} & \dots & \mA_1 \\ 
\mA_1 & \mA_0 & \mA_{n-1} & \dots & \mA_2 \\ 
\vdots & \vdots & \vdots & \ddots & \vdots \\
\mA_{n-1} & \mA_{n-2} & \mA_{n-3} & \dots & \mA_0
\end{pmatrix}.\]
Let $\unfold{\cA}$ be the flattened matrix version of $\cA$, i.e. it is the $mn \times \ell$ matrix
\[ \unfold{\cA} := \begin{pmatrix}
    \mA_0 \\ \mA_1 \\ \vdots \\ \mA_{n-1}
\end{pmatrix}.\]
Also let $\fold{\cdot}$ be the function that reverses $\unfold{\cdot}$, so that $\fold{\unfold{\cA}} = \cA.$ Note that fold requires the number of frontal slices in order to do the unfolding. All of our tensors will have the same number $n$ of frontal slices, so there is no ambiguity in how fold works.
\end{definition}

\begin{definition} [t-product]
Let $\cA \in \m R^{m \times \ell \times n}$.
We can now define the t-product. Let $\cX$ be a $\m R^{\ell \times q \times n}$ tensor. The t-product, as defined in~\cite{KILMER2011641}, between $\cA$ and $\cX$ is \edit{the tensor in $\m R^{m \times q \times n}$ given by}
\[ \cA \ast \cX := \fold { \bcirc{\cA} \unfold{\cX}},\]
where the product $\bcirc{\cA}\unfold{\cX}$ is the usual matrix-matrix product.
\end{definition}

\begin{definition} [Transpose, Hermitian]
The transpose $\cA^{\edit{T}}$ of a tensor is the tensor \edit{in $\m R^{\ell \times m \times n}$} obtained by taking the transpose of every frontal slice \edit{of $\cA$} and also reversing the order of the frontal slices $1, 2, \dots, n-1.$ In other words,
\[ \unfold{\cA^{\edit{T}}} = \begin{pmatrix}
    \mA_0^{\edit{T}} \\ \mA_{n-1}^{\edit{T}} \\ \vdots \\ \mA_1^{\edit{T}}
\end{pmatrix}.\]
When using both slice and transpose notation, we apply the slice first then apply the transpose, e.g. $\cA_{i::}^{\edit{T}} := (\cA_{i::})^{\edit{T}}.$

$\cA$ is said to be Hermitian if $\cA = \cA^{\edit{T}}.$
\end{definition}

\begin{definition} [Norm]
The inner product of two tensors (of the same dimension) is the sum of the element-wise products of the tensors,
$$ \langle \cA, \cB \rangle := \sum_{i, j, k} \cA_{ijk}\cB_{ijk}.$$
Then, we define the norm $\|\cA\|$ to be the Frobenius norm,
$$ \|\cA\| := \sqrt{\langle \cA, \cA \rangle} = \sqrt{\sum_{i,j,k} \cA_{ijk}^2}.$$
Since we view slices of tensors as sub-tensors, the norm of slices is also the Frobenius norm, \edit{e.g.}
$$ \|\cA_{i::}\| = \sqrt {\sum_{j, k} \cA_{ijk}^2}.$$

\end{definition}

\subsection{Tensor Linear Systems with Missing Data}

Let $\cA \in \m R^{m \times \ell \times n}, \cX \in \m R^{\ell \times q \times n},$ and $\cB \in \m R^{m \times q \times n}$ be third-order tensors. We are interested in solving the tensor linear system $\cA \ast \cX = \cB$ given $\cB$ and only partial knowledge of $\cA.$ Here, $\cA$ represents the data, $\cX$ represents the unknown variables, and $\cB$ represents the measurements. 

Our method is a general method that applies to a variety of missing data models. To showcase this, we consider three specific missing data models, including the uniform model studied in~\cite{Ma2025Stocha}. Let $\tilde \cA$ denote the tensor of observed elements, and let $\cD \in \{0, 1\}^{m \times \ell \times n}$ be a \textit{binary mask} indicating which elements we observe, so that $\tilde \cA = \cD \circ \cA$, where $\circ$ indicates the element-wise product. The three missing data models are as follows.
\begin{enumerate}
    \item (Uniform missing model) Every entry of $\cA$ is missing independently with a probability $1 - p$. Formally, the entries of $\cD$ are i.i.d. Bernoulli random variables with parameter $p$. 
    Note that this uniform missing model was the model used in~\cite{Ma2025Stocha}. By generalizing the methods in that paper, we obtain their results as a corollary.
    \item (Column block missing model) Let $b$ be a divisor of the number of column slices $\ell$. Then, for each row slice of $\cA$, each column block of size $b$ is missing with a probability $1 - p.$ Formally, the column blocks $\cD_{i:, 0:b-1, :}, \cD_{i:, b:2b-1, :}, \dots$ of each row slice of $\cD$ are i.i.d variables equaling the tensor of all ones with probability $p$ and the zero tensor with probability $1 - p$.
    \item (Frontal slice missing model) For each row slice of $\cA$, each frontal slice is missing independently with a probability $1 - p$. Formally, the frontal slices of each row slice of $\cD$ are i.i.d variables equaling the tensor of all ones with probability $p$ and the zero tensor with probability $1 - p.$
\end{enumerate}

We remark that the the first two models have natural analogues in the matrix case, but the third has no such analogue. Indeed, uniformly missing entries and missing column blocks make sense in a matrix setting, but missing frontal slices is unique in the tensor setting. Despite this, our technique still applies to this unique problem just as easily as it applies to the first two.

 Under any missing data model, the problem that we study can be written as an optimization program
\begin{align*}
    &\text{Given } \tilde \cA = \cD \circ \cA, \cB, \text{ and } p \\
    &\text{Find } \cX_\star = \mathrm{argmin}_{\cX \in \cW}  \f 1 {2m} \|\cA \ast \cX - \cB\|^2,
\end{align*}
where $\cW$ is a convex domain containing the solution $\cX_\star$ of $\cA \ast \cX = \cB.$
\edit{Here, the $1/2$ scaling factor is for convenience to cancel with the squared term when taking the gradient, and the $1/m$ scaling factor averages the objective over the number of row slices. It should be noted that this scaling does not change the overarching minimization problem.}

\section{Stochastic Gradient Descent for Tensors with Missing Data} \label{sec:sgd}
SGD is a popular iterative method used to minimize a convex objective function $F(\cX)$ over a convex domain $\cW$. In particular, SGD has been useful in machine learning applications, such as in~\cite{amiri2020machine} and~\cite{bottou2010large}.

SGD works by using an estimator of the gradient of an objective function to determine what direction to step in at each iteration. If $g$ is a random function providing this estimate, then SGD proposes to update iterates via
\begin{equation} \label{eq:SGDupdate}
\cX^{t+1} = \cP_\cW(\cX^t - \alpha_t g(\cX^t)),
\end{equation}
where $\alpha_t$ is an appropriately chosen learning rate or step size, and $\cP_\cW$ is the projection onto a convex set $\cW$. The choice of $\alpha_t$ in SGD is important. A smaller $\alpha_t$ makes it more likely that the step is a descent step (i.e., one that lowers the value of the objective function) but slows down convergence. Thus, there is a tradeoff between the speed of convergence and the accuracy of convergence.

Previous works have used variants of SGD for tensor decomposition~\cite{maehara2016expected, kolda2020stochastic}, completion~\cite{papastergiou2017tensorcompletion}, and recovery~\cite{chen2021tensorrecovery,ma2020randomizedkaczmarztensorlinear}. For example,~\cite{grotheer2023iterativesingulartubehard} combines SGD with thresholding for sparse tensor recovery and~\cite{papastergiou2017tensorcompletion} introduces proximal SGD-based algorithms for tensor completion. Randomized Kaczmarz, a special case of SGD, has also been studied on tensors. \cite{chen2021tensorrecovery} applies a tensor version of Kaczmarz using a learning rate proportional to $1/\|\cA\|^2$ to solve tensor recovery problems. In contrast, \cite{ma2020randomizedkaczmarztensorlinear} proposes a more complex update step by incorporating a tensor inverse. 
It should be noted that these works either solve tensor recovery with all data or deal with missing data (e.g., via tensor completion) independently. In this work, we tackle the recovery and missing data problems simultaneously. 

With SGD, we can obtain various convergence results depending on additional properties of $F$ or $g$. Convergence results for SGD typically require that $g$ provides an unbiased estimate for the gradient of $F$, that is, $\m E[g(\cX)] = \nabla F(X).$ For example, the following lemma, which we use for one of our convergence results, has it as a requirement.

\begin{lemma}(\cite{shamir2013stochastic} Theorem 2) Suppose that $F$ is convex and $\cW$ is a closed convex domain containing the solution $\cX_\star$. Furthermore, suppose that for some constants $G$ and $K$, it holds that $\m E[g(\cX)] = \nabla F(\cX)$ and $\m E[\|g(\cX)\|^2] \le G$ for all $\cX \in \cW$, and $\sup_{\cW_1, \cW_2 \in \cal W} \| \cW_1 - \cW_2\| \le K$. Using step size $\alpha_t = C / \sqrt t$ where $C>0$ is a constant, and using the SGD update in equation~\ref{eq:SGDupdate}, the resulting iterates satisfy
$$\E [ F(\cX^{t}) - F(\cX_\star) ] \leq \left ( \f {K^2} C + C G \right) \f {2 + \log (t) } {\sqrt t}.$$
\label{lem:sgdbound}
\end{lemma}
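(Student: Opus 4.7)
The plan is to reproduce the standard SGD descent analysis and then convert the usual average-iterate bound into a last-iterate bound via the suffix-averaging argument of Shamir and Zhang. Because $\cW$ is convex and $\cX_\star \in \cW$, the projection $\cP_\cW$ is non-expansive, so the update~\eqref{eq:SGDupdate} gives
\[
\|\cX^{t+1} - \cX_\star\|^2 \;\le\; \|\cX^t - \alpha_t g(\cX^t) - \cX_\star\|^2.
\]
Expanding the right-hand side, taking conditional expectation given $\cX^t$, and using the hypotheses $\m E[g(\cX^t)\mid \cX^t] = \nabla F(\cX^t)$ and $\m E[\|g(\cX^t)\|^2 \mid \cX^t] \le G$, I get
\[
\m E[\|\cX^{t+1} - \cX_\star\|^2 \mid \cX^t] \;\le\; \|\cX^t - \cX_\star\|^2 - 2\alpha_t \langle \nabla F(\cX^t), \cX^t - \cX_\star \rangle + \alpha_t^2 G.
\]

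Next I would invoke convexity of $F$, which gives $\langle \nabla F(\cX^t), \cX^t - \cX_\star \rangle \ge F(\cX^t) - F(\cX_\star)$. Rearranging and taking full expectations yields the key one-step inequality
\[
\m E[F(\cX^t) - F(\cX_\star)] \;\le\; \f{1}{2\alpha_t}\bigl(\m E\|\cX^t - \cX_\star\|^2 - \m E\|\cX^{t+1} - \cX_\star\|^2\bigr) + \f{\alpha_t G}{2}.
\]
Summing from $s = 1$ to $t$, using the diameter bound $\|\cX^s - \cX_\star\| \le K$, and plugging in $\alpha_s = C/\sqrt s$ produces, by convexity, a bound of the desired order on the \emph{averaged} iterate $\bar\cX^t = \f{1}{t}\sum_{s=1}^t \cX^s$; this is the classical argument and is essentially routine.

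The main obstacle is that the statement bounds the \emph{last} iterate $\cX^t$, not an average, and the naive averaging bound is off by a factor of $\sqrt t / \log t$ in the worst case. To get the last-iterate rate, I would follow Shamir--Zhang and define the suffix averages $S_k := \f{1}{k}\sum_{s = t-k+1}^{t} \m E[F(\cX^s) - F(\cX_\star)]$ for $k = 1, 2, \dots, t$. Applying the one-step inequality to each suffix and carefully telescoping yields a recursion of the form $S_{k+1} - S_k \le \f{1}{k+1}\bigl(S_{k+1}^{\text{gap}} + \text{tail terms}\bigr)$, where the tail terms involve $K^2/\alpha_{t-k}$ and $\alpha_{t-k} G$. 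Iterating this recursion from $k=t$ down to $k=1$ gives $\m E[F(\cX^t) - F(\cX_\star)] = S_1$ bounded by the average-iterate term plus a sum $\sum_{k=1}^{t} 1/k$, which is precisely what produces the logarithmic factor in the numerator. With the choice $\alpha_s = C/\sqrt s$, the two contributions combine as $(K^2/C)\cdot (2+\log t)/\sqrt t$ from the distance terms and $CG \cdot (2+\log t)/\sqrt t$ from the variance terms, giving the stated bound.

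The delicate step is the suffix-averaging bookkeeping: one must verify that the constants collapse exactly into the factor $K^2/C + CG$ in front of $(2+\log t)/\sqrt t$, rather than picking up extra polylogarithmic overhead. Everything else --- the projection, unbiasedness, convexity, and second-moment bound --- enters only through the one-step inequality above, and in particular the argument is agnostic to the tensor structure, so it transfers immediately to our setting with $\cX \in \cW \subseteq \m R^{\ell \times q \times n}$ and the Frobenius norm.
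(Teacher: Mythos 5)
Your sketch is the standard Shamir--Zhang argument (non-expansiveness of $\cP_\cW$, the one-step inequality from unbiasedness and the second-moment bound, then suffix averaging to convert the average-iterate bound into a last-iterate bound with the $\log t$ factor), and that is exactly the route behind the cited result: the paper itself offers no proof of Lemma~\ref{lem:sgdbound}, importing it verbatim as Theorem~2 of~\cite{shamir2013stochastic}. So your proposal is correct in approach and matches the source; the only part you leave unverified is the suffix-averaging bookkeeping that yields the exact constant $K^2/C + CG$, which is carried out in that reference.
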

 Here, $C$ represents the step size constant factor, $G$ gives a bound on the expected squared norm of $g$, and $K$ gives a bound on the distance between any two elements $\cW_1$ and $\cW_2$ of $\cW$, which can be interpreted as a bound on the size of $\cW.$

Let
\[F(\cX) := \f 1 {2m} \| \cA \ast \cX - \cB\|^2\] be our objective function. The main challenge in applying this lemma to our missing data setting is in choosing a suitable $g$ that provides an unbiased estimate for $\nabla F$, as we cannot directly compute $\nabla F$ without knowing $\cA.$

For our analysis, we require the following assumptions. Assumption~\ref{assump:row} is a technical one unrelated to the model. It is needed to make the randomness in $g$ easier to deal with. Assumption~\ref{assump:model} is an assumption on the model that simplifies the computation of some expectations.

\begin{assumption} \label{assump:row} Either every row $\cA_{i::}$ is chosen at most once during Algorithm~\ref{alg:mSGDT}, or the mask is redrawn at each iteration, meaning the data that is available from each row may change with time. If $\cA$ has a large number of rows, it is reasonable to assume that the former is satisfied.
\end{assumption} 
\begin{assumption} \label{assump:model} Every entry of $\cA$ occurs with a probability $p$. That is,
$\m E [ \tilde \cA] = p\, \m E[\cA].$ Note that the missingness of the entries do not have to be independent of each other.
\end{assumption}
Under these assumptions, we apply SGD using an update function $g$ in the form
\begin{align} 
g(\cX) := &\frac{1}{p^2} \left(\ctA_{i::}^{\edit{T}} \ast (\ctA_{i::}\ast\cX - p \cB_{i::}) \right) - \frac{1-p}{p^2} \cC \circ (\ctA_{i::}^{\edit{T}}\ast\ctA_{i::})\ast\cX, \label{eqn:g}
\end{align}
where $\circ$ is the element-wise product and $\cC$ is a tensor that depends on the assumed missing data model. Note that the element-wise product $\cC \circ (\ctA_{i::}^{\edit{T}}\ast\ctA_{i::})$ is computed before the t-product with $\cX$.
\edit{Alternatively, if we had chose to define $\ctA$ by scaling each nonzero entry by $1/p$ so that $\m E[\ctA] = \m E[\cA]$, the update function would become 
\begin{align*} 
g_{other}(\cX) :=  \left(\ctA_{i::}^{\edit{T}} \ast (\ctA_{i::}\ast\cX - p \cB_{i::}) \right) - (1-p) \cC \circ (\ctA_{i::}^{\edit{T}}\ast\ctA_{i::})\ast\cX.
\end{align*}
Thus, choosing a step size $\alpha_{other} = \frac{\alpha}{p^2}$ results in the same update as the proposed method.}

We can think of $\frac{1-p}{p^2} \cC \circ (\ctA_{i::}^{\edit{T}}\ast\ctA_{i::})\ast\cX$ as the correction term to ensure that $\m E[g(\cX)] = \nabla F(\cX)$ as required in Lemma~\ref{lem:sgdbound}. In the matrix case under the uniform missing model in~\cite{ma2019stochasticgradientdescentlinear}, the correction term only used the diagonal entries of $\tilde \mA_i^{\edit{T}} \tilde \mA_i$. In our tensor case, we see that the correction term can use many more entries of $\ctA_{i::}^{\edit{T}}\ast\ctA_{i::}$ beyond just a diagonal of a single frontal slice. This flexibility is what allows our algorithm to handle a variety of missing data models. For example, we will find that the column block missing data model requires entries from every frontal slice of the tensor.

Algorithm~\ref{alg:mSGDT} describes our SGD-based algorithm. During each iteration of the descent, a row index $i \in [m] := \{0, 1, \dots, m-1\}$ is chosen uniformly at random, $g(\cX^t)$ is computed on the current iterate based on the choice, and then the iterate $\cX^{\edit{T}}$ is updated via equation~\eqref{eq:SGDupdate}. 

\begin{algorithm}
\caption{mSGDT} \label{alg:mSGDT}
\begin{algorithmic} 
\State \textbf{Input:}  $\cX^0\in \Re^{\ell\times q\times n},$ $\tilde \cA\in \Re^{m\times \ell \times n}$, $\cB\in\Re^{m\times q\times n}$, $p \in (0, 1)$. Closed convex domain $\cW$ that contains the solution $\cX_\star.$
\Procedure{}{$\ctA$, $\cB$, $T$, $p$, $\{\alpha_t\}$}
\State Set $\cC$ satisfying~\eqref{eqn:c1} and~\eqref{eqn:c2} for all $0 \le i < m.$
\For {$t=0,1,\ldots, T - 1 $} 
\State Choose row index $i \in [m]$ uniformly at random
\State 
$g(\cX^{t}) = \frac{1}{p^2} \left(\ctA_{i::}^{\edit{T}}\ast(\ctA_{i::}\ast\cX^t - p \cB_{i::}) \right) - \frac{1-p}{p^2}\cC \circ (\tilde \cA_{i::}^{\edit{T}} \ast \tilde \cA_{i::} ) \ast\cX^t$
   \State $\cX^{t+1} = \proj \left( \cX^t - \alpha_t g(\cX^t) \right) $
\EndFor
\State Output $\cX^{T}$
\EndProcedure
\end{algorithmic}
\end{algorithm}

In order for this algorithm to converge, the choice of $\cC$ is crucial to ensure $\m E[g(\cX)] = \nabla F(\cX)$. Observe that there are two sources of randomness in this expectation of $g$: the choice of the row index $i$ and the binary mask $\cD$. Let $\m E_i[\cdot]$ denote the expectation with respect to the choice of the row index, and let $\m E_{\cD}[\cdot]$ denote the expectation with respect to the mask. Then, the expectation of $g(\cX)$ is the expectation over $i$ and ${\cD}$. That is, 
\[ \m E[g(\cX)] = \m E_i[\m E_{\cD}[g(\cX)]].\]
Now, if $\cC$ can be chosen such that the following equations are true for all $0 \le i < m$,
\begin{align}
    \m E_{\cD} \left[ \cC \circ(\tilde \cA_{i::}^{\edit{T}} \ast \tilde \cA_{i::}) \right] &= p \cC \circ(\cA_{i::}^{\edit{T}} \ast \cA_{i::}), \label{eqn:c1}\\
    \m E_{\cD} \left[ (\mathbb {1} - \cC) \circ(\tilde \cA_{i::}^{\edit{T}} \ast \tilde \cA_{i::}) \right] &= p^2 (\mathbb {1} -  \cC) \circ(\cA_{i::}^{\edit{T}} \ast \cA_{i::})\label{eqn:c2},
\end{align}
where $\mathbb{1}$ is the tensor of all ones, then under Assumption~\ref{assump:model}, it is simple to see that $\m E[g(\cX)] = \nabla F(\cX)$:
\begin{align*}
    \m E[g(\cX)] &= \frac{1}{p^2} \m E \left[\ctA_{i::}^{\edit{T}} \ast (\ctA_{i::}\ast\cX - p \cB_{i::}) - (1-p) \cC \circ (\ctA_{i::}^{\edit{T}}\ast\ctA_{i::})\ast\cX \right]\\ 
    &= \f 1 {p^2} \m E_i \left[ \m E_{\cD} \left[ (\cC + ( \mathbb{1} - \cC) - (1-p) \cC) \circ (\tilde \cA_{i::}^{\edit{T}} \ast \tilde \cA_{i::}) \right] \ast \cX - p\m E_{\cD} \left[ \tilde \cA_{i::}^{\edit{T}} \ast \cB_{i::} \right]\right] \\
    &= \f 1 {p^2} \m E_i \left[ (p\cC + p^2(\mathbb{1} - \cC)  - (1-p) p \cC) \circ (\cA_{i::}^{\edit{T}} \ast \cA_{i::}) \ast \cX - p^2 \cA_{i::}^{\edit{T}} \ast \cB_{i::} \right] \\
    &= \m E_i \left[ \cA_{i::}^{\edit{T}} \ast \cA_{i::} \ast \cX - \cA_{i::}^{\edit{T}} \ast \cB_{i::} \right] \\
    &= \f 1 m \cA^{\edit{T}} \ast (\cA \ast \cX - \cB) \\ 
    &= \nabla F(\cX). \numberthis \label{eqn:Egx}
\end{align*}
Hence, if such a $\cC$ exists, then standard convergence results for SGD should apply. We present two such results: one for a step size proportional to $1/\sqrt {t} $ and the other for a constant step size.
Theorem~\ref{thm:changingstepsize} handles the former case, saying that the objective function will converge to zero as the number of iterations tends to infinity. Theorem~\ref{thm:fixedstepsize} explores the latter case, showing that the error $\|\cX^t - \cX_\ast\|$ in the iterates decreases quickly until a convergence horizon.

Thus, these two theorems imply that the optimal way to use Algorithm~\ref{alg:mSGDT} is to first use a constant step size for quick initial progress before switching to a step size proportional to $1 / \sqrt {t} $. Experimentally, the simplest and best way to switch is to do so after a set number of iterations. See Section~\ref{sec:experiments} for a more in-depth discussion. 

\begin{theorem} \label{thm:changingstepsize} Suppose that Assumptions~\ref{assump:row} and~\ref{assump:model} hold, and suppose there exists a tensor $\cC$ of zeros and ones satisfying~\eqref{eqn:c1} and~\eqref{eqn:c2} for all $0 \le i < m.$

Let $\cX_\star \in \m R^{\ell \times q \times n}$ be such that $\cA \ast \cX_\star = \cB$, let $\cD$ be a binary mask, and let $\tilde \cA = \cD \circ \cA.$ Choosing $\alpha_t = C / \sqrt t$, Algorithm~\ref{alg:mSGDT} converges in expectation such that the error in objective function $F(\cX) = \f 1 {2m} \|\cA \ast \cX - \cB\|^2$ satisfies
\begin{equation*}
\E [ F(\cX^{t}) - F(\cX_\star) ] \leq \left ( \f {K^2} C + CG \right) \f {2 + \log (t) } {\sqrt t},
\end{equation*} 
where $G = \f {4 n^2 R^2} {p^3m} \sum_{i = 0}^{m-1}  \| \cA_{i::} \|^4 +  \f {4 n^{3/2} R} {p^2m} \sum_{i = 0}^{m-1} \| \cA_{i::} \|^3 \| \cB_{i::} \| + \f {2 n}{p^2m} \sum_{i = 0}^{m-1}  \| \cA_{i::} \|^2 \|\cB_{i::} \|^2,$ $R = \max_{\cW_1 \in \cW} \| \cW_1 \|$, and $K$ is a constant satisfying $\sup_{\cW_1 , \cW_2 \in \cal W} \|\cW_1 - \cW_2\| \le K$.
\end{theorem}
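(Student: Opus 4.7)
The plan is to apply Lemma~\ref{lem:sgdbound} to the stochastic gradient $g$ of~\eqref{eqn:g}. Three of the lemma's hypotheses are already in hand: convexity of $F$ (immediate from $F$ being a nonnegative quadratic), boundedness of the domain $\cW$ by $K$ (hypothesized), and the unbiasedness $\m E[g(\cX)] = \nabla F(\cX)$ (established in~\eqref{eqn:Egx} via conditions~\eqref{eqn:c1}--\eqref{eqn:c2}). The only remaining task is to verify the uniform second-moment bound $\m E[\|g(\cX)\|^2]\leq G$ for all $\cX\in\cW$ with the explicit $G$ in the statement; once that is in hand, Lemma~\ref{lem:sgdbound} delivers the conclusion verbatim.

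To bound the second moment, combine the two occurrences of $\ctA_{i::}^*\ast\ctA_{i::}$ inside $g$ to rewrite
\[
g(\cX) \;=\; \tfrac{1}{p^2}\bigl[(\mathbb{1}-(1-p)\cC)\circ(\ctA_{i::}^*\ast\ctA_{i::})\bigr]\ast\cX \;-\; \tfrac{1}{p}\,\ctA_{i::}^*\ast\cB_{i::},
\]
so that $\|g(\cX)\|\leq T_1+T_2$ by the triangle inequality, where $T_k$ denotes the norm of the $k$th summand. Two tensor-norm estimates drive the computation. First, $\|\cM\ast\cN\|\leq\sqrt{n}\,\|\cM\|\,\|\cN\|$, which follows from $\|\cM\ast\cN\|=\|\bcirc{\cM}\,\unfold{\cN}\|_F\leq\|\bcirc{\cM}\|_{\mathrm{op}}\|\cN\|$ together with $\|\bcirc{\cM}\|_{\mathrm{op}}\leq\sqrt{n}\,\|\cM\|$, the latter obtained by block-diagonalizing $\bcirc{\cM}$ via the DFT along the tube direction and applying Parseval. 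Second, the Hadamard product is a Frobenius contraction when one factor has sup-norm at most one, and since $\cC$ has entries in $\{0,1\}$ the tensor $\mathbb{1}-(1-p)\cC$ has entries in $[p,1]\subseteq[0,1]$. Chaining these with $\|\cX\|\leq R$ yields
\[
T_1\leq\frac{n}{p^2}\,\|\ctA_{i::}\|^2\,R,\qquad T_2\leq\frac{\sqrt{n}}{p}\,\|\ctA_{i::}\|\,\|\cB_{i::}\|.
\]

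Squaring $\|g(\cX)\|\leq T_1+T_2$ gives the three contributions $T_1^2$, $2T_1T_2$, and $T_2^2$, whose dependence on $\|\cA_{i::}\|$, $\|\cB_{i::}\|$, $n$, and $R$ matches the three summands of $G$. Taking expectation over the mask and using the marginal identity $\m E_{\cD}[\|\ctA_{i::}\|^2]=p\,\|\cA_{i::}\|^2$ together with $\m E_{\cD}[\|\ctA_{i::}\|^4]\leq p\,\|\cA_{i::}\|^4$---the latter obtained by expanding $(\sum_{j,k}\cD_{ijk}\cA_{ijk}^2)^2$ and using the pointwise bound $\m E_{\cD}[\cD_{ijk}\cD_{ij'k'}]\leq p$ inherited from the Bernoulli marginals, without needing independence---recovers the correct powers of $p$ in each term of $G$, while the numerical constants $4,4,2$ come from applying the arithmetic slack $(1-p)\leq 1$ and $p\leq 1$ to the deterministic pieces. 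Averaging the per-$i$ bound over the uniform row choice $i\in[m]$ then yields $\m E[\|g(\cX)\|^2]\leq G$, and Lemma~\ref{lem:sgdbound} applies.

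The main obstacle is bookkeeping rather than conceptual: at each of several tensor inequalities one must decide whether to bound $\|\ctA_{i::}\|$ deterministically by $\|\cA_{i::}\|$ or to first take an expectation over $\cD$, since those choices trade a power of $p$ for a looser multiplicative constant. The decomposition above---collapsing the two $\ctA_{i::}^*\ast\ctA_{i::}$ terms before invoking the triangle inequality---is what makes the cross contribution $2T_1T_2$ produce the middle summand of $G$ cleanly; a cruder split $\|g(\cX)\|^2\leq 2T_1^2+2T_2^2$ would omit that term and leave the wrong scaling in the other two.
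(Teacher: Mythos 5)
Your proof is correct and follows the paper's overall strategy---verify convexity, unbiasedness via \eqref{eqn:c1}--\eqref{eqn:c2}, and a second-moment bound, then invoke Lemma~\ref{lem:sgdbound}---but the way you establish $\m E[\|g(\cX)\|^2]\le G$ genuinely differs from the paper's Lemma~\ref{lem:G}. The paper keeps $g$ in its two written summands, applies $\|a-b\|^2\le 2\|a\|^2+2\|b\|^2$, and then uses the model conditions \eqref{eqn:c1}--\eqref{eqn:c2} to evaluate $\m E_{\cD}\big[\|\ctA_{i::}\ast\cX-p\cB_{i::}\|^2\big]$ exactly before bounding; you instead merge the two quadratic-in-$\ctA_{i::}$ pieces into the single term $\f 1{p^2}\big[(\mathbb{1}-(1-p)\cC)\circ(\ctA_{i::}^*\ast\ctA_{i::})\big]\ast\cX$, use the triangle inequality, the Hadamard contraction (entries of $\mathbb{1}-(1-p)\cC$ lie in $[p,1]$), sub-multiplicativity, and only the marginal moment bounds $\m E_{\cD}[\|\ctA_{i::}\|^2]=p\|\cA_{i::}\|^2$ and $\m E_{\cD}[\|\ctA_{i::}\|^4]\le p\|\cA_{i::}\|^4$. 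This buys you a proof that never touches \eqref{eqn:c1}--\eqref{eqn:c2} in the second-moment estimate (they are still needed for unbiasedness) and in fact yields a per-row bound slightly smaller than the stated $G$; the paper's route, by contrast, is the one that generalizes to Lemma~\ref{lem:G*}, where the exact identity $\m E_{\cD}[\|\ctA_{i::}\ast\cX_\star-p\cB_{i::}\|^2]=(p-p^2)\langle\cC\circ(\cA_{i::}^*\ast\cA_{i::})\ast\cX_\star,\cX_\star\rangle$ is what kills the $\cB$-dependent terms, something your cruder marginal bounds cannot do. Two small patches: the cross term $2T_1T_2$ requires $\m E_{\cD}[\|\ctA_{i::}\|^3]\le p\|\cA_{i::}\|^3$, which you do not state but which follows either from $\|\ctA_{i::}\|\le\|\cA_{i::}\|$ or from Cauchy--Schwarz applied to your two stated moments; and your closing remark that the split $\|g\|^2\le 2T_1^2+2T_2^2$ would ``leave the wrong scaling'' is not accurate---since $G$ only needs to be an upper bound and its middle summand is nonnegative, that cruder split would also suffice here.
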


\begin{theorem}\label{thm:fixedstepsize}
   Suppose that Assumptions~\ref{assump:row} and~\ref{assump:model} hold, and suppose there exists a Hermitian tensor $\cC$ of zeros and ones satisfying~\eqref{eqn:c1} and~\eqref{eqn:c2} for all $0 \le i < m.$
   
Let $\cX_\star \in \m R^{\ell \times q \times n}$ be such that $\cA \ast \cX_\star = \cB$, let $\cD$ be a binary mask, and let $\tilde \cA = \cD \circ \cA.$
   Furthermore, suppose the objective function $F(\cX) = \f 1 {2m} \| \cA \ast \cX - \cB\|^2$ is $\mu$-strongly convex for some $\mu > 0,$  
   let $L_g = na^2_{max} / p^2$, where $a_{max}$ is the maximum Frobenius norm of a row slice of $\cA$, and let $G_\star = \f {4n^2 R^2}{p^3m} \sum_{i = 0}^{m-1} \| \cA_{i::} \|^4.$ 
   
   Then, with fixed step size $\alpha < 1 / L_g$, Algorithm~\ref{alg:mSGDT} converges with expected error, for all $t > 0,$
   $$\m E[ \| \cX^{t} - \cX_\star\|^2] \le r^t \| \cX^0 - \cX_\star\|^2 + \f {\alpha G_\star}{\mu(1 - \alpha L_g)},$$
   where $r := (1 - 2 \alpha \mu(1 - \alpha L_g)).$
\end{theorem}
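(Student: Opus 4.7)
The plan is to carry out the standard SGD convergence analysis for smooth, strongly convex objectives adapted to the tensor setting. First I would apply non-expansiveness of $\proj$ to the update~\eqref{eq:SGDupdate}, expand $\|\cX^t - \alpha g(\cX^t) - \cX_\star\|^2$, and take the conditional expectation given $\cX^t$ (jointly over the random row index and the mask, where Assumption~\ref{assump:row} guarantees the randomness at step $t$ is independent of $\cX^t$). By the unbiasedness identity $\m E[g(\cX^t)]=\nabla F(\cX^t)$ already derived in~\eqref{eqn:Egx}, the cross term collapses to $-2\alpha\langle\nabla F(\cX^t),\cX^t-\cX_\star\rangle$, giving
\[\m E\bigl[\|\cX^{t+1}-\cX_\star\|^2\mid\cX^t\bigr]\le\|\cX^t-\cX_\star\|^2-2\alpha\langle\nabla F(\cX^t),\cX^t-\cX_\star\rangle+\alpha^2\m E\bigl[\|g(\cX^t)\|^2\mid\cX^t\bigr].\]

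Next I would bound the two remaining terms. Because $\cA\ast\cX_\star=\cB$ forces $\nabla F(\cX_\star)=0$, $\mu$-strong convexity yields $\langle\nabla F(\cX^t),\cX^t-\cX_\star\rangle\ge\mu\|\cX^t-\cX_\star\|^2$. The key technical step is an expected-smoothness inequality of the form
\[\m E\bigl[\|g(\cX^t)\|^2\mid\cX^t\bigr]\le 2L_g\,\langle\nabla F(\cX^t),\cX^t-\cX_\star\rangle+2G_\star,\]
with $L_g$ and $G_\star$ as defined in the theorem. Plugging this into the previous display and using $\alpha<1/L_g$ to keep the inner-product coefficient non-negative produces the one-step contraction
\[\m E\bigl[\|\cX^{t+1}-\cX_\star\|^2\mid\cX^t\bigr]\le r\,\|\cX^t-\cX_\star\|^2+2\alpha^2 G_\star.\]
Taking total expectation and iterating $t$ times, the geometric sum $\sum_{k=0}^{t-1}r^k\le 1/(1-r)=1/(2\alpha\mu(1-\alpha L_g))$ converts the accumulated noise into exactly the stated bias $\alpha G_\star/(\mu(1-\alpha L_g))$, alongside the geometric decay $r^t\|\cX^0-\cX_\star\|^2$ of the initialization error.

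The main obstacle is establishing the second-moment bound on $\m E[\|g(\cX^t)\|^2]$. Expanding~\eqref{eqn:g} produces terms involving $\|\ctA_{i::}^*\ast\ctA_{i::}\ast\cX^t\|^2$, a correction from $\cC\circ(\ctA_{i::}^*\ast\ctA_{i::})$, and mixed terms with $\cB_{i::}$. My strategy is to first take $\m E_\cD$ using the model identities~\eqref{eqn:c1} and~\eqref{eqn:c2} together with the Hermitian assumption on $\cC$ --- the latter being what ensures that the symmetric products $\cC\circ(\cA_{i::}^*\ast\cA_{i::})$ can be moved across a t-product cleanly so that the resulting expression factors as intended. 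I would then switch to the block-circulant representation and use $\|\cA\ast\cY\|\le\sqrt{n}\,\|\cA\|\,\|\cY\|$, which is the source of the $n$ appearing in $L_g$ and the $n^2$ appearing in $G_\star$; applying $\|\cX^t\|\le R$ on the pieces that do not yet involve $\cX^t-\cX_\star$, and recognizing that $\nabla F(\cX^t)=\tfrac{1}{m}\cA^*\ast(\cA\ast\cX^t-\cB)$, allows the remaining pieces to be gathered into the $2L_g\langle\nabla F(\cX^t),\cX^t-\cX_\star\rangle+2G_\star$ form. This variance estimate is the most delicate part because the element-wise product with $\cC$ does not commute with the t-product in general, and because tensor Frobenius norms are only related to block-circulant spectral norms up to a factor of $\sqrt{n}$, so the bookkeeping of the $n$ factors has to be tracked with care.
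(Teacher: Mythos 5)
Your outer loop is the same as the paper's: expand the squared error after the non-expansive projection, use unbiasedness from~\eqref{eqn:Egx}, strong convexity, and a geometric sum; and the ``expected-smoothness'' inequality $\m E[\|g(\cX^t)\|^2\mid\cX^t]\le 2L_g\langle\nabla F(\cX^t),\cX^t-\cX_\star\rangle+2G_\star$ that you isolate is exactly the estimate the paper's proof runs on. The gap is in how you propose to establish it. First, you cannot get it by ``first taking $\m E_\cD$ using~\eqref{eqn:c1} and~\eqref{eqn:c2}'': expanding $\|g\|^2$ produces products of four masked entries of $\ctA_{i::}$, i.e.\ fourth moments of the mask, which the second-moment conditions~\eqref{eqn:c1}--\eqref{eqn:c2} do not determine. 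The paper sidesteps this by pulling out deterministic norm bounds (valid for every realization of the mask) before any expectation is taken. Second, and more importantly, pure norm bookkeeping with the $\sqrt n$-sub-multiplicativity of Lemma~\ref{lem:sub-multiplicative} can only give $\|g(\cX)-g(\cX_\star)\|\le L_g\|\cX-\cX_\star\|$, which leads to a recursion with factor $1-2\alpha\mu+2\alpha^2L_g^2$ and a more restrictive step-size condition, not the theorem's $r=1-2\alpha\mu(1-\alpha L_g)$ under $\alpha<1/L_g$. To trade the squared difference for the inner product with constant $L_g$ (not $L_g^2$), the paper splits $g(\cX)=(g(\cX)-g(\cX_\star))+g(\cX_\star)$, bounds $\m E\|g(\cX_\star)\|^2\le G_\star$ by Lemma~\ref{lem:G*} (this is where the residual $\cA_{i::}\ast\cX_\star-\cB_{i::}=0$ is used), and applies the co-coercivity inequality of Lemma~\ref{lem:lipinequality} to the difference.

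That co-coercivity step is where the Hermitian hypothesis on $\cC$ actually enters, and your account of it is off: nothing in the argument ``moves $\cC\circ(\cA_{i::}^*\ast\cA_{i::})$ across a t-product''; indeed the Hadamard product with $\cC$ does not interact with the t-product at all. Rather, Lemma~\ref{lem:lipinequality} requires that $g$ be a gradient field, i.e.\ that there exist a smooth $f$ with $\nabla f=g$, and this is nontrivial here because of the correction term $\frac{1-p}{p^2}\,\cC\circ(\ctA_{i::}^*\ast\ctA_{i::})\ast\cX$. The paper's Lemma~\ref{lem:Gver} constructs such an $f$ explicitly, decomposing the Hermitian tensor $\cC\circ(\ctA_{i::}^*\ast\ctA_{i::})$ frontal slice by frontal slice into outer products and assembling quadratics $\tfrac12\|\cT\ast\cX\|^2$; Hermitian-ness of $\cC$ is exactly what makes this construction possible. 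Your proposal, as written, asserts the key inequality but supplies neither the potential-function argument nor a workable substitute, so the claimed constants $L_g$, $G_\star$, and the rate $r$ are not justified by the route you describe.
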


Note that these theorems impose some additional conditions on $\cC$ beyond just satisfying~\eqref{eqn:c1} and~\eqref{eqn:c2}. Both require that $\cC$ consists of just zeros and ones -- this is needed in Lemmas~\ref{lem:G},~\ref{lem:G*}, and~\ref{lem:Lg} to compute the constants $G,G_\star,$ and $L_g$. If necessary, one can weaken this condition at the cost of complicating the computation of these constants. Theorem~\ref{thm:fixedstepsize} also imposes that $\cC$ is Hermitian. This is a condition needed to validate a technical step in the proof, see Lemma~\ref{lem:Gver}.

We remark that the combination of~\eqref{eqn:c1} and~\eqref{eqn:c2} says that $\cC$ indicates the entries of $\tilde \cA_{i::}^{\edit{T}} \ast \tilde \cA_{i::}$ whose expected values have a coefficient of $p$, while $\mathbb{1} - \cC$ indicates those entries whose expected values have a coefficient of $p^2$. These conditions are quite unrestrictive, so the results we obtain apply to a broad class of missing data models.

The following corollary directly applies these two theorems to the three missing data models we study. The corollary follows directly from the theorems after we construct $\cC$ for the missing data models. This is done in Proposition~\ref{prop:modelsC}. Note that the portion of the corollary pertaining to the uniform missing model is the same as the results obtained in~\cite{Ma2025Stocha}.
\begin{corollary} \label{cor:application}
Suppose that Assumption~\ref{assump:row} holds, and moreover suppose that the missing data model is either the uniform missing model, the column block missing model, or the frontal slice missing model. Let $\cX_\star \in \m R^{\ell \times q \times n}$ be such that $\cA \ast \cX_\star = \cB$, let $\cD$ be a binary mask under the chosen missing data model, and let $\tilde \cA = \cD \circ \cA.$

Choosing $\alpha_t = C / \sqrt t$, Algorithm~\ref{alg:mSGDT} converges in expectation such that the error in objective function $F(\cX) = \f 1 {2m} \|\cA \ast \cX - \cB\|^2$ satisfies
\begin{equation*}
\E [ F(\cX^{t}) - F(\cX_\star) ] \leq \left ( \f {K^2} C + CG \right) \f {2 + \log (t) } {\sqrt t},
\end{equation*} 
where $G = \f {4 n^2 R^2} {p^3m} \sum_{i = 0}^{m-1}  \| \cA_{i::} \|^4 +  \f {4 n^{3/2} R} {p^2m} \sum_{i = 0}^{m-1} \| \cA_{i::} \|^3 \| \cB_{i::} \| + \f {2 n}{p^2m} \sum_{i = 0}^{m-1}  \| \cA_{i::} \|^2 \|\cB_{i::} \|^2,$ $R = \max_{\cW_1 \in \cW} \| \cW_1 \|$, and $K$ is a constant satisfying $\sup_{\cW_1 , \cW_2 \in \cal W} \|\cW_1 - \cW_2\| \le K$.

Furthermore, suppose $F$ is $\mu$-strongly convex for some $\mu > 0$, let $L_g = na^2_{max} / p^2$, where $a_{max}$ is the maximum Frobenius norm of a row slice of $\cA$, and let $G_\star = \f {4n^2 R^2}{p^3m} \sum_{i = 0}^{m-1} \| \cA_{i::} \|^4.$
   
Then, with fixed step size $\alpha < 1 / L_g$, Algorithm~\ref{alg:mSGDT} converges with expected error, for all $t > 0,$
$$\m E[ \| \cX^{t} - \cX_\star\|^2] \le r^t \| \cX^0 - \cX_\star\|^2 + \f {\alpha G_\star}{\mu(1 - \alpha L_g)},$$
where $r := (1 - 2 \alpha \mu(1 - \alpha L_g)).$
\end{corollary}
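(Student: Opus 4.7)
The plan is to reduce the corollary to Theorems~\ref{thm:changingstepsize} and~\ref{thm:fixedstepsize} by exhibiting, for each of the three missing data models, a Hermitian tensor $\cC \in \{0,1\}^{\ell \times \ell \times n}$ satisfying \eqref{eqn:c1} and \eqref{eqn:c2} for every row index $i$. Assumption~\ref{assump:model} is immediate since each of the three models prescribes marginal probability $p$ per entry, and Assumption~\ref{assump:row} is carried as a hypothesis from the two theorems. Hence, once such a $\cC$ is produced for a given model, both displayed bounds follow directly from the corresponding theorem; this is the content of Proposition~\ref{prop:modelsC}.

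The first step is to derive an entrywise formula for $\tilde \cA_{i::}^* \ast \tilde \cA_{i::}$. Combining the block-circulant definition of the t-product with the transpose convention (transpose of every frontal slice together with a reversal of slices $1, \dots, n-1$), one obtains
\begin{equation*}
[\tilde \cA_{i::}^* \ast \tilde \cA_{i::}]_{j_1, j_2, k} = \sum_{s=0}^{n-1} \tilde \cA_{i, j_1, s}\, \tilde \cA_{i, j_2, (s+k) \bmod n}.
\end{equation*}
Taking $\E_{\cD}$ of this formula reduces verification of \eqref{eqn:c1} and \eqref{eqn:c2} to understanding $\E[\cD_{i, j_1, s}\, \cD_{i, j_2, (s+k) \bmod n}]$, which equals $p$ whenever the two mask variables are identically equal under the model (the ``correlated'' case) and $p^2$ whenever they are independent.

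The key observation is that in each of the three models, for every fixed $(j_1, j_2, k)$ the $n$ pairs of mask variables appearing in the sum above are either all correlated or all independent, so the identification is clean. I then set $\cC_{j_1, j_2, k} = 1$ on the correlated triples and $0$ on the independent ones, which makes \eqref{eqn:c1} and \eqref{eqn:c2} hold termwise. Explicitly: for the uniform model, correlation requires $(j_1, s) = (j_2, (s+k) \bmod n)$ for every $s$, forcing $j_1 = j_2$ and $k = 0$, so $\cC$ is the identity in the $0$th frontal slice and zero elsewhere. For the column block model, correlation requires $\lfloor j_1/b \rfloor = \lfloor j_2/b \rfloor$ independent of $k$, giving a block-diagonal all-ones pattern repeated across all frontal slices. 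For the frontal slice model, correlation requires $s = (s+k) \bmod n$ for every $s$, forcing $k = 0$, so $\cC$ has the all-ones matrix in the $0$th frontal slice and zero elsewhere. In each case every frontal slice of $\cC$ is symmetric and the slices at indices $1, \dots, n-1$ are either all zero or all equal, so $\cC = \cC^*$.

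The main obstacle will be the bookkeeping needed to derive the entrywise formula above, in particular tracking the shift $(s+k) \bmod n$ that arises from the interaction between the block-circulant structure and the transpose's reversal of frontal slices. Once that formula is established, the dichotomy of ``correlated versus independent'' under each model is a routine case check, the Hermitian and $\{0,1\}$-valued properties of $\cC$ are immediate from the explicit forms above, and the two bounds in the corollary follow by applying Theorems~\ref{thm:changingstepsize} and~\ref{thm:fixedstepsize} to the constructed $\cC$.
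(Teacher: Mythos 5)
Your proposal is correct and follows essentially the same route as the paper: it reduces the corollary to Theorems~\ref{thm:changingstepsize} and~\ref{thm:fixedstepsize} by constructing, per model, a Hermitian $\{0,1\}$-valued $\cC$ satisfying~\eqref{eqn:c1} and~\eqref{eqn:c2}, exactly as in Proposition~\ref{prop:modelsC}, with your entrywise circulant-shift formula being just an index-level restatement of the paper's expansion of $\tilde\cA_{i::}^*\ast\tilde\cA_{i::}$ into sums of outer products $\tilde a_j^*\tilde a_k$. Your explicit choices of $\cC$ (identity in the zeroth frontal slice, block-diagonal ones in every slice, all-ones zeroth slice) coincide with the paper's $\cC_U$, $\cC_C$, $\cC_F$, and the all-correlated-or-all-independent dichotomy you isolate is the same fact the paper uses implicitly in its casework.
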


\section{Convergence Proofs} \label{sec:convergence}
Besides Lemma~\ref{lem:sgdbound}, the only extra ingredient needed for Theorem~\ref{thm:changingstepsize} is a bound on $\m E[\|g(\cX)\|^2].$ Note that since $\cW$ is bounded, $\m E[\|g(\cX)\|^2]$ is necessarily bounded. But we explicitly calculate $G$ to make the discovered bound more clear. As this is a technical computation, we defer this to the Appendix in Lemma~\ref{lem:G}. Now, the proof of Theorem~\ref{thm:changingstepsize} is straightforward.
\\

\begin{proof} [Proof of Theorem~\ref{thm:changingstepsize}] To use Lemma~\ref{lem:sgdbound}, we need three facts about $F(\cX) = \f 1 {2m} \| \cA \ast \cX - \cB \|^2$ and $g(\cX)$.
\begin{enumerate}
    \item $F(\cal X)$ is convex. This is clear.
    \item $\m E[g(\cX)] = \nabla F(\cX)$. This was shown to follow from~\eqref{eqn:c1} and~\eqref{eqn:c2}, see~\eqref{eqn:Egx}.
    \item $\m E[\|g(\cX)\|^2] \le G,$ where $G = \f {4 n^2 R^2} {p^3m} \sum_{i = 0}^{m-1}  \| \cA_{i::} \|^4 +  \f {4 n^{3/2} R} {p^2m} \sum_{i = 0}^{m-1} \| \cA_{i::} \|^3 \| \cB_{i::} \| + \f {2 n}{p^2m} \sum_{i = 0}^{m-1}  \| \cA_{i::} \|^2 \|\cB_{i::} \|^2,$ $R = \max_{\cW_1 \in \cW} \| \cW_1 \|$, and $K$ is a constant satisfying $\sup_{\cW_1 , \cW_2 \in \cal W} \|\cW_1 - \cW_2\| \le K$. This is Lemma~\ref{lem:G}.
\end{enumerate}
\end{proof}

To prove Theorem~\ref{thm:fixedstepsize}, we will need the following lemma to rewrite $\|g(\cX^t) - g(\cX_\star)\|^2$. Lemma~\ref{lem:Lg} shows that the $L_g$ in the statement of the theorem is a Lipschitz constant for our function $g$, while Lemma~\ref{lem:Gver} shows that there is a smooth $f$ with $\nabla f = g.$ We remark that both in the matrix case in~\cite{ma2019stochasticgradientdescentlinear} and in the tensor case in~\cite{Ma2025Stocha} under the uniform missing data model, it was straightforward to find such an $f$ for their update function. However, the construction for a general $g$ in the form~\eqref{eqn:g} takes more care.
\begin{lemma}[\cite{needell2014stochastic} Lemma A.1] \label{lem:lipinequality}
    If $g$ is a function such that there exists a smooth $f$ with $\nabla f = g$, and $g$ has Lipschitz constant $L$, then $$ \|g(\cX) - g(\cY) \|^2 \le L \langle \cX - \cY, g(\cX) - g(\cY) \rangle.$$
\end{lemma}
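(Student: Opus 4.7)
The plan is to prove the classical co-coercivity (Baillon--Haddad) bound for gradients of smooth convex functions. The statement as written implicitly requires $f$ to be convex -- otherwise trivial counterexamples such as $f(x) = -x^2/2$, whose gradient is $1$-Lipschitz but which satisfies $(x-y)^2 \not\le -(x-y)^2$, defeat it -- so I would proceed under that hypothesis. Since the inequality involves only the inner product and a gradient, the proof is abstract and carries over from the Euclidean setting to the tensor setting verbatim, once inner products and norms are interpreted in the Frobenius sense defined in the paper. First, for a fixed $\cY$, I would introduce the auxiliary function $\phi_\cY(\cZ) := f(\cZ) - \langle g(\cY), \cZ \rangle$, which is convex, has $L$-Lipschitz gradient $\nabla \phi_\cY(\cZ) = g(\cZ) - g(\cY)$, and satisfies $\nabla \phi_\cY(\cY) = 0$, so that $\cY$ is its global minimizer.

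Next I would invoke the standard descent lemma (the quadratic upper bound that follows from an $L$-Lipschitz gradient applied to $\phi_\cY$): for any $\cX, \cZ$,
$$ \phi_\cY(\cZ) \le \phi_\cY(\cX) + \langle g(\cX) - g(\cY),\, \cZ - \cX \rangle + \frac{L}{2}\|\cZ - \cX\|^2. $$
Minimizing the right-hand side in $\cZ$, equivalently taking $\cZ = \cX - \tfrac{1}{L}(g(\cX) - g(\cY))$, and using $\phi_\cY(\cY) \le \phi_\cY(\cZ)$ yields
$$ \phi_\cY(\cY) \le \phi_\cY(\cX) - \frac{1}{2L}\|g(\cX) - g(\cY)\|^2. $$
Unpacking the definition of $\phi_\cY$, this rearranges to $f(\cY) - f(\cX) \le \langle g(\cY), \cY - \cX \rangle - \tfrac{1}{2L}\|g(\cX) - g(\cY)\|^2$.

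Finally, I would swap the roles of $\cX$ and $\cY$ to obtain the companion inequality, and adding the two cancels the $f$-values on both sides and leaves exactly $\tfrac{1}{L}\|g(\cX) - g(\cY)\|^2 \le \langle g(\cX) - g(\cY),\, \cX - \cY \rangle$, which is the claim after rearranging. In my view the main obstacle is not any individual step -- each is textbook -- but the implicit convexity assumption: for the paper's use of this lemma, one must, when constructing the $f$ with $\nabla f = g$ for the update $g$ in~\eqref{eqn:g}, also verify that this antiderivative is convex. That is not automatic, because the correction term $-\tfrac{1-p}{p^2}\, \cC \circ (\tilde\cA_{i::}^* \ast \tilde\cA_{i::}) \ast \cX$ inside $g$ is sign-indefinite; the Hermitian hypothesis on $\cC$ in Theorem~\ref{thm:fixedstepsize} is presumably what allows Lemma~\ref{lem:Gver} to produce such an $f$, and that verification, rather than the abstract inequality above, is where the real work lies.
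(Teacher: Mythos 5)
Your argument is correct and is the standard co-coercivity (Baillon--Haddad) proof: fix $\cY$, form $\phi_\cY(\cZ)=f(\cZ)-\langle g(\cY),\cZ\rangle$, apply the descent lemma at the minimizer $\cY$, and symmetrize. The paper itself gives no proof of this lemma --- it is quoted verbatim from \cite{needell2014stochastic} (Lemma A.1) --- and your route is the same one used in that reference, so there is nothing to compare at the level of technique. The substantive content of your proposal is the observation that convexity of $f$ is indispensable: as transcribed in this paper the hypothesis is dropped, and your counterexample $f(x)=-x^2/2$ shows the statement is false without it (the cited Lemma A.1 does assume convexity). You are also right that this is not merely cosmetic for the paper's purposes. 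In the proof of Theorem~\ref{thm:fixedstepsize}, step (c) invokes this lemma for the update direction $g$ of~\eqref{eqn:g}, taking the antiderivative $f$ supplied by Lemma~\ref{lem:Gver}; but that construction is a signed combination of terms of the form $\pm\frac12\|u_j^*\ast\cX\|^2$ together with the $\cT^{(j,k)}$ terms, and neither it nor $g$ itself (whose correction term $-\frac{1-p}{p^2}\,\cC\circ(\tilde\cA_{i::}^*\ast\tilde\cA_{i::})\ast\cX$ is sign-indefinite) is shown to correspond to a convex potential. So, under the convexity hypothesis your proof of the lemma is complete and essentially identical to the source; the gap you flag lies not in your argument but in the paper's statement and its application, where convexity of the $f$ from Lemma~\ref{lem:Gver} would need to be verified (or the co-coercivity step replaced) for step (c) of Theorem~\ref{thm:fixedstepsize} to be fully justified.
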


\begin{proof} [Proof of Theorem~\ref{thm:fixedstepsize}]

 Let $\cE^{t} = \cX^{t} - \cX_\star$ denote the error at iteration $t$. Let $\E_{t-1}[\cdot]$ denote the expected value conditioned on the previous $t-1$ iterations, i.e. conditioned on $\cX^{t-1}.$ We have 
\begin{align*}
	\E_{t-1} &\edit{[ \| \cE^{t} \|^2 ]} \\&\le \E_{t-1} \left[ \| \cX^{t-1} - \alpha g(\cX^{t-1}) - \cX_\star \|^2 \right] \\
	& = \| \cE^{t-1} \|^2 - 2 \alpha \langle \cE^{t-1}, \E_{t-1} \left[ g(\cX^{t-1}) \right]\rangle  + \alpha^2 \E_{t-1} \left[ \| g(\cX^{t-1}) \|^2 \right]  \\
	& \overset{(a)}{=} \| \cE^{t-1} \|^2 - 2 \alpha \langle \cE^{t-1}, \nabla F(\cX^{t-1}) - \nabla F(\cX_\star)\rangle +  \alpha^2 \E_{t-1} \left[ \| g(\cX^{t-1}) \|^2 \right] \\
& \overset{(b)}{\leq}  \| \cE^{t-1} \|^2 - 2 \alpha \langle \cE^{t-1}, \nabla F(\cX^{t-1}) - \nabla F(\cX_\star)\rangle  
	        + 2 \alpha^2 \E_{t-1} \left [ \| g(\cX^{t-1}) - g(\cX_\star) \|^2 \right] +  2\alpha^2\E_{t-1} \left[ \| g(\cX_\star)\|^2 \right] \\
 &\overset{(c)}{\leq} \| \cE^{t-1} \|^2 - 2 \alpha \langle \cE^{t-1}, \nabla F(\cX^{t-1}) - \nabla F(\cX_\star)\rangle  
	      + 2\alpha^2 L_{g} \langle \cE^{t-1}, \E_{t-1} [ g(\cX^{t-1})] -  \E_{t-1}  [g(\cX_\star)]\rangle   +  2\alpha^2 G_\star \\
	& = \| \cE^{t-1} \|^2 - 2 \alpha (1- \alpha L_g) \langle \cE^{t-1}, \nabla F(\cX^{t-1}) - \nabla F(\cX_\star)\rangle + 2\alpha^2 G_\star \\
		 &\overset{(d)}{\leq} \| \cE^{t-1} \|^2 - 2 \alpha (1- \alpha L_g) \mu \|\cE^{t-1}\|^2  + 2\alpha^2 G_\star \\ 
		& = \left(1 - 2 \alpha \mu (1- \alpha L_g) \right) \| \cE^{t-1} \|^2 + 2\alpha^2 G_\star \\
		& = r \| \cE^{t-1} \|^2 + 2\alpha^2 G_\star .
\end{align*}
Here, (a) holds by recalling $\m E [g(\cX)] = \nabla F(\cX)$ and $\nabla F(\cX_\star) = \f 1 m \cA^{\edit{T}} \ast (\cA \ast \cX_\star - \cB) = 0.$ (b) follows from:
\begin{align*} 
 \m E_{t-1} [ \| g(\cX^{t-1}) \| ^2] &= 
\m E_{t-1} [ \| (g(\cX^{t-1}) - g(\cX_\star)) + g(\cX_\star) \|^2] 
\\ &\le 
\m E_{t-1}[ 2 \| g(\cX^{t-1}) - g(\cX_\star)\|^2 + 2 \|g(\cX_\star)\|^2] \\ 
&\le 2 \m E_{t-1} [ \|g(\cX^{t-1} - g(\cX_\star)\|^2] + 2 \m E_{t-1} [ \|g(\cX_\star)\|^2].
\end{align*} 
(c) uses Lemma~\ref{lem:lipinequality} and replaces $\m E_{t-1} [ \| g(\cX_\star)]\|^2$ with $G_\star$ via Lemma~\ref{lem:G*}. (d) uses the assumption that $F$ is $\mu$-strongly convex.

The desired bound follows by iteratively recursively applying this result, using the Law of Iterated Expectation.
\begin{align*}
    \m E [\| \cE^{t} \|^2] &=
    \m E[ \m E_{t-1} [\| \cE^t \|^2 ]] \\ 
    &\le r\m E[ \| \cE^{t-1} \|^2] + 2 \alpha^2 G_\star \\ 
    & \le r^2 \m E [ \| \cE^{t-2} \|^2] + 2r \alpha^2 G_\star + 2 \alpha^2 G_\star \\ 
     & \enspace \vdots \\ 
    & \le r^t \m E[ \| \cE^0\|^2] + 2\alpha^2 G_\star (r^{t-1} + \dots + r^0) \\ 
    & = r^t \| \cX^0 - \cX_\star\|^2 + \f { 2 \alpha^2 G_\star (1 - r^t)}{1 - r} \\ 
    &= r^t \| \cX^0 - \cX_\star\|^2 + \f { 2 \alpha^2 G_\star (1 - r^t)}{2 \alpha \mu (1 - \alpha L_g)} \\ 
    &= r^t \| \cX^0 - \cX_\star\|^2 + \f {\alpha G_\star (1 - r^t)}{\mu(1 - \alpha L_g)} \\ 
    &\le r^t \| \cX^0 - \cX_\star\|^2 + \f {\alpha G_\star}{\mu(1 - \alpha L_g)}.
\end{align*}
Here, the assumption $\alpha < 1 / L_g$ implies $r = 1 - 2 \alpha \mu(1 - \alpha L_g) < 1.$
\end{proof}

In order to deduce Corollary~\ref{cor:application} from Theorems~\ref{thm:changingstepsize} and~\ref{thm:fixedstepsize}, we need to construct $\cC$ for each of the three missing data models such that the necessary conditions hold.
Proposition~\ref{prop:modelsC} below picks the correct $\cC$.

\begin{proposition} \label{prop:modelsC}
    For the three models, the following choices of $\cC$ are Hermitian tensors of zeros and ones that satisfy~\eqref{eqn:c1} and~\eqref{eqn:c2} for all $0 \le i < m.$
    \begin{enumerate}
        \item (Uniform missing model) $\cC$ is the tensor with ones along the frontal diagonal and zeros elsewhere. Denote this tensor by $\cC_{U}.$
        \item (Column block missing model) $\cC$ is the tensor where every frontal slice is a block diagonal matrix with blocks of size $b$ of ones on the diagonal. Denote this tensor by $\cC_{C}.$
        \item (Frontal slice missing model) $\cC$ is the tensor where the zeroth frontal slice is the matrix of all ones, and all other frontal slices are zero. Denote this tensor by $\cC_{F}.$
    \end{enumerate}
\end{proposition}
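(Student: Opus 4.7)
The plan is to derive an explicit entry-wise formula for $[\tilde\cA_{i::}^* \ast \tilde\cA_{i::}]_{j,k,s}$ and then verify~\eqref{eqn:c1} and~\eqref{eqn:c2} for each of the three models via a short case analysis. Unpacking the definitions of the t-product, block circulant, and tensor transpose, I expect the identity
$$[\cA_{i::}^* \ast \cA_{i::}]_{j, k, s} \;=\; \sum_{a=0}^{n-1} \cA_{i,j,a}\, \cA_{i, k, (a+s) \bmod n},$$
so that substituting $\tilde\cA = \cD \circ \cA$ gives
$$[\tilde\cA_{i::}^* \ast \tilde\cA_{i::}]_{j, k, s} \;=\; \sum_{a=0}^{n-1} \cD_{i,j,a}\, \cD_{i, k, (a+s) \bmod n}\, \cA_{i,j,a}\, \cA_{i, k, (a+s) \bmod n}.$$

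The main observation driving the proof is that in all three models, for each fixed $(j,k,s)$ every term in this sum falls into exactly one of two categories: either all terms pair one Bernoulli variable with itself (so $\m E_{\cD}[\cD \cdot \cD] = p$), or every term pairs two independent Bernoullis (so the expectation is $p^2$). Once this dichotomy is established, I just need to identify, per model, the set $S$ of $(j,k,s)$ for which the first case occurs and check that the proposed $\cC$ is its indicator. For the uniform model, the two mask entries coincide iff $(j,a) = (k,(a+s) \bmod n)$, which for fixed $(j,k,s)$ means $j=k$ and $s=0$, recovering $\cC_U$. For the column block model, the coincidence depends only on whether $j$ and $k$ lie in the same column block of size $b$ and is independent of $a$ and $s$, recovering $\cC_C$. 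For the frontal slice model, the coincidence requires $a = (a+s) \bmod n$, i.e.\ $s = 0$, independent of $j$ and $k$, recovering $\cC_F$. Once the support is matched, \eqref{eqn:c1} and~\eqref{eqn:c2} follow termwise, since $\cC$ isolates the entries with expectation-coefficient $p$ while $\mathbb{1} - \cC$ isolates those with coefficient $p^2$.

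Checking that each $\cC$ is Hermitian and $\{0,1\}$-valued is by inspection. Both $\cC_U$ and $\cC_F$ have nonzero entries only in their zeroth (symmetric) frontal slice, so the per-slice transpose and the reversal of slices $1,\ldots,n-1$ in the Hermitian definition act trivially. For $\cC_C$, every frontal slice equals the same symmetric block-diagonal $\{0,1\}$-matrix, so again both Hermitian operations act as the identity.

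The main technical obstacle will be step one: carefully combining the cyclic index permutation induced by the tensor transpose with the circular convolution pattern in the block-circulant form of the t-product to produce the entry-wise formula above. Once that identity is in hand, the rest of the proof is the brief case analysis described above, which is structurally identical across the three models and reduces to verifying when two Bernoulli entries of $\cD$ are forced by the model to be the same random variable.
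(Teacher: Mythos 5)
Your proposal is correct and takes essentially the same approach as the paper: the paper writes $\tilde \cA_{i::}^*\ast\tilde \cA_{i::}$ as a fold of sums of outer products $\tilde a_j^*\tilde a_k$ and performs exactly your casework on when the two mask entries are the same Bernoulli variable (coefficient $p$) versus independent (coefficient $p^2$), matching the resulting support to $\cC_U$, $\cC_C$, $\cC_F$. Your entrywise circulant-index formula is just the scalar form of that same decomposition, and your termwise verification of \eqref{eqn:c1} and \eqref{eqn:c2} is equivalent to the paper's step of establishing \eqref{eqn:c1} together with the full-expectation identity and obtaining \eqref{eqn:c2} by subtraction.
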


\begin{proof}
It is obvious that the entries of each $\cC$ are zeros and ones, and that each $\cC$ is Hermitian.

Thus, we turn to proving~\eqref{eqn:c1} and~\eqref{eqn:c2}. Let $0 \le i < m.$ For $0 \le k < n$, \edit{recall that we view the $k$th frontal slice of $\tilde \cA_{i::}$ as a vector with the notation $\tilde a_k \in \m R^{\ell}.$}  Observe that
\begin{align*}
    \tilde \cA_{i::}^{\edit{T}}\ast\tilde\cA_{i::} = 
    \text{fold} \begin{pmatrix}
        \tilde a_0^{\edit{T}}\tilde a_0 + \tilde a_1^{\edit{T}}\tilde a_1 + \dots + \tilde a_{n-1}^{\edit{T}}\tilde a_{n-1} \\ \tilde a_{n-1}^{\edit{T}} \tilde a_0 + \tilde a_0^{\edit{T}}\tilde a_1 + \dots + \tilde a_{n-2}^{\edit{T}}\tilde a_{n-1} \\ 
        \vdots \\ 
        \tilde a_1^{\edit{T}}\tilde a_0 + \tilde a_2^{\edit{T}}\tilde a_1 + \dots + \tilde a_0^{\edit{T}}\tilde a_{n-1}
    \end{pmatrix}.
\end{align*}
Inside the $\fold{\cdot}$, we have many outer products of vectors, which are easy to analyze. We do casework on the model.

\begin{enumerate}
    \item (Uniform missing model) Under this model, for every pair of coordinates $x, y$, $\m E_{\cD} [(\tilde a^{\edit{T}}_j \tilde a_k)_{xy}]$ is equal to $p^2(a_j^{\edit{T}}a_k)_{xy}$ unless the same two elements of $\cA$ are being multiplied together, in which case there is only a factor of $p$. That is, 
    $$ \m E_{\cD} [(\tilde a^{\edit{T}}_j \tilde a_k)_{xy}] = \begin{cases}
    p^2 (a_j^{\edit{T}} a_k)_{xy}, & j \neq k \text{ or } x \neq y\\ 
    p (a_j^{\edit{T}} a_k)_{xy}, & j = k \text{ and } x = y.
\end{cases}$$
Hence,
\begin{align*}
&\m E_{\cD} [\tilde \cA_{i::}^{\edit{T}} \ast \tilde \cA_{i::}] = p^2 \cA_{i::}^{\edit{T}} \ast \cA_{i::} + (p - p^2)\cC_{U} \circ (\cA_{i::}^{\edit{T}}\ast \cA_{i::}), \\ 
&\m E_{\cD} \left[ \cC_U \circ(\tilde \cA_{i::}^{\edit{T}} \ast \tilde \cA_{i::}) \right] = p \cC_U \circ(\cA_{i::}^{\edit{T}} \ast \cA_{i::}).
\end{align*} 
\item (Column block missing model) In this model, for every pair of coordinates $x, y$, $\m E_{\cD}[(\tilde a_j^{\edit{T}}\tilde a_k)_{xy}]$ is equal to $p^2(a_j^{\edit{T}} a_k)_{xy}$ unless two elements of $\cA$ in the same column block are being multiplied together, in which case there is only a factor of $p$. That is, 
    $$ \m E_{\cD} [(\tilde a^{\edit{T}}_j \tilde a_k)_{xy}] = \begin{cases}
    p^2 (a_j^{\edit{T}} a_k)_{xy}, & x, y \text{ not in the same column block}\\ 
    p (a_j^{\edit{T}} a_k)_{xy}, & x, y \text{ in the same column block.}
\end{cases}$$
Hence,
\begin{align*}
&\m E_{\cD} [\tilde \cA_{i::}^{\edit{T}} \ast \tilde \cA_{i::}] = p^2 \cA_{i::}^{\edit{T}} \ast \cA_{i::} + (p - p^2)\cC_{C} \circ (\cA_{i::}^{\edit{T}}\ast \cA_{i::}), \\ 
&\m E_{\cD} \left[ \cC_C \circ(\tilde \cA_{i::}^{\edit{T}} \ast \tilde \cA_{i::}) \right] = p \cC_C \circ(\cA_{i::}^{\edit{T}} \ast \cA_{i::}).
\end{align*} 
\item (Frontal slice missing model) For every pair of coordinates $x, y$, $\m E_{\cD}[(\tilde a_j^{\edit{T}}\tilde a_k)_{xy}]$ is equal to $p^2(a_j^{\edit{T}} a_k)_{xy}$ unless two elements of $\cA$ in the same frontal slice are being multiplied together, in which case there is only a factor of $p$. That is, 
    $$ \m E_{\cD} [(\tilde a^{\edit{T}}_j \tilde a_k)_{xy}] = \begin{cases}
    p^2 (a_j^{\edit{T}} a_k)_{xy}, & j \neq k\\ 
    p (a_j^{\edit{T}} a_k)_{xy}, & j = k.
\end{cases}$$
Hence,
\begin{align*}
&\m E_{\cD} [\tilde \cA_{i::}^{\edit{T}} \ast \tilde \cA_{i::}] = p^2 \cA_{i::}^{\edit{T}} \ast \cA_{i::} + (p - p^2)\cC_{F} \circ (\cA_{i::}^{\edit{T}}\ast \cA_{i::}), \\ 
&\m E_{\cD} \left[ \cC_F \circ(\tilde \cA_{i::}^{\edit{T}} \ast \tilde \cA_{i::}) \right] = p \cC_F \circ(\cA_{i::}^{\edit{T}} \ast \cA_{i::}).
\end{align*} 
\end{enumerate}
Therefore, regardless of the model,
\begin{align}
&\m E_{\cD} [\tilde \cA_{i::}^{\edit{T}} \ast \tilde \cA_{i::}] = p^2 \cA_{i::}^{\edit{T}} \ast \cA_{i::} + (p - p^2)\cC \circ (\cA_{i::}^{\edit{T}}\ast \cA_{i::}), \label{eqn:c3} \\ 
&\m E_{\cD} \left[ \cC \circ(\tilde \cA_{i::}^{\edit{T}} \ast \tilde \cA_{i::}) \right] = p \cC \circ(\cA_{i::}^{\edit{T}} \ast \cA_{i::}). \label{eqn:c1proof}
\end{align} 
\eqref{eqn:c1proof} is exactly~\eqref{eqn:c1}. Subtracting~\eqref{eqn:c1proof} from~\eqref{eqn:c3} yields the desired~\eqref{eqn:c2}.
\end{proof}

\section{Experiments and Implementation} 
In this section, we present numerical experiments using synthetically generated and real-world data. The code can be found in the following GitHub link: \href{https://github.com/alexxue99/mSGDT}{github.com/alexxue99/mSGDT}
\label{sec:experiments}
\subsection{Synthetic Data}
In this subsection, we simulate mSGDT on synthetic data for each of the three missing data models. We generate elements of $\cA \in \m R^{10^6 \times 20 \times 10}$ and $\cX \in \m R^{20 \times 10 \times 10}$ by drawing i.i.d. from a standard Gaussian distribution. We enforce the assumption that rows of $\cA$ are picked only once by picking rows without replacement and using $10^6$ iterations. 

\edit{After drawing $\cA$ and $\cX$, we run each missing data model for 30 trials, making sure to do a pass over the entire tensor $\cA$ and zero out entries according to the missing data model being tested for each trial. We then plot the mean of the Frobenius norm error, and we include the standard deviation over the 30 trials as a shaded error bar around the mean.}


The constant step size we use is $\alpha = p^2 / 5000$. Note the $p^2$ factor here. A dependence on $p$ is expected since a smaller $p$ means that we have less data and should therefore choose a less aggressive step size. Furthermore, a $p^2$ factor is the natural choice when considering the result $L_g = na^2_{max}/ p ^2$ in Lemma~\ref{lem:Lg} and the requirement of the step size being less than $1 / L_g$ in Theorem~\ref{thm:fixedstepsize}. The 5000 number here was also chosen to satisfy this $1 / L_g$ condition, obtained by using a rough safe estimate for the maximum Frobenius norm of a row slice of a tensor of $\cA$'s dimensions.

For the varying step size, we use $\alpha_t = p^2 /  \sqrt {5000t}$, where the $\sqrt{5000}$ factor was chosen to match the step size with the constant step size at the swap.


\begin{figure}[h]
    \centering
    \includegraphics[width=0.7\textwidth]{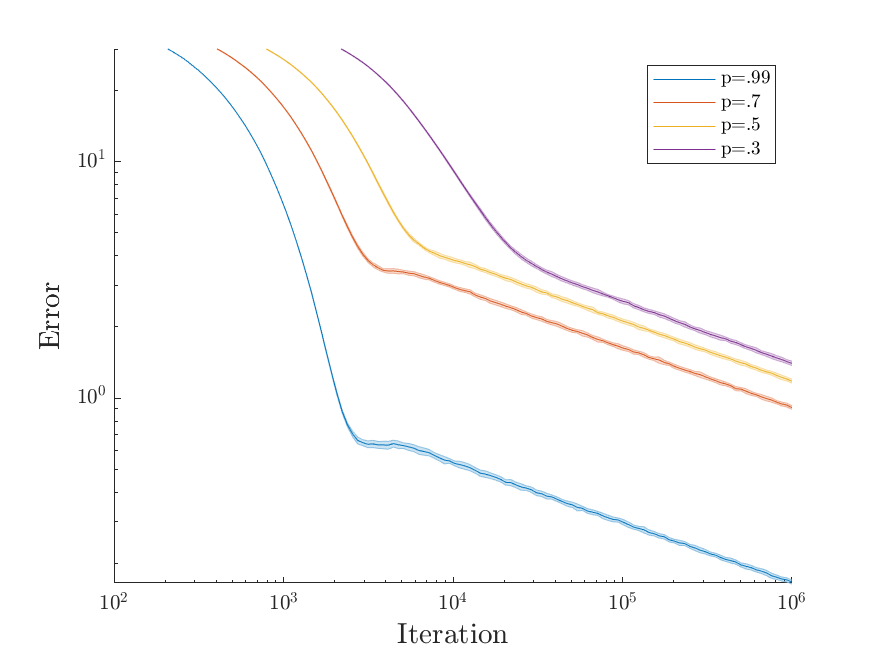}
    \caption{Log-log plot for Algorithm~\ref{alg:mSGDT} under the uniform missing data model, for varying values of $p \in \{.3, .5, .7, .99\}$. The $x$-axis is the iteration, and the $y$-axis is the error, defined as the Frobenius norm of $\cX^t - \cX_\ast$.}
    \label{fig:regular}
\end{figure}

\begin{figure}[h]
    \centering
    \includegraphics[width=0.7\textwidth]{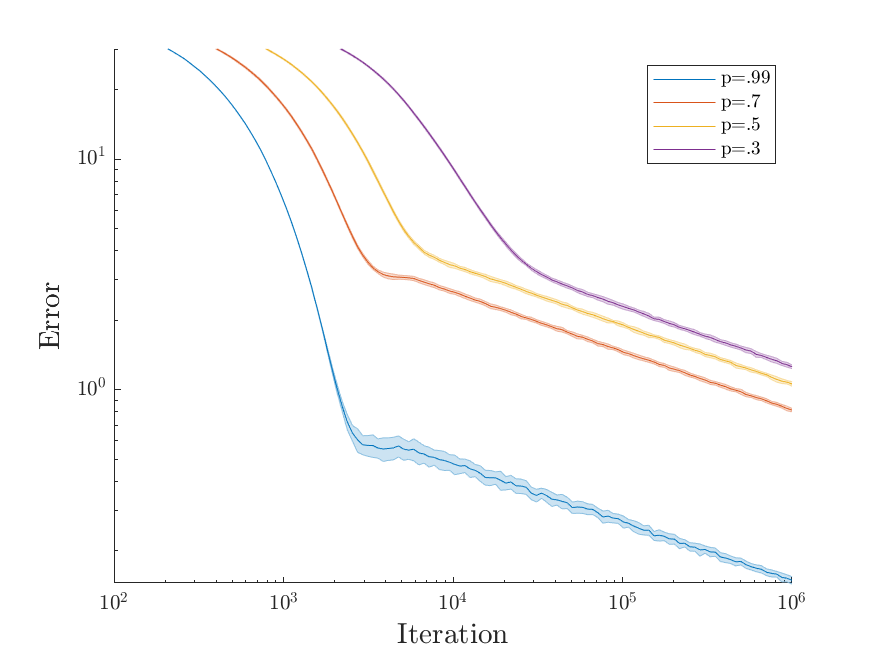}
    \caption{Log-log plot for Algorithm~\ref{alg:mSGDT} under the column block missing data model, for block size $b = 4$ and for varying values of $p \in \{.3, .5, .7, .99\}$. The $x$-axis is the iteration, and the $y$-axis is the error, defined as the Frobenius norm of $\cX^t - \cX_\ast$.}
    \label{fig:column}
\end{figure}

\begin{figure}[h]
    \centering
    \includegraphics[width=0.7\textwidth]{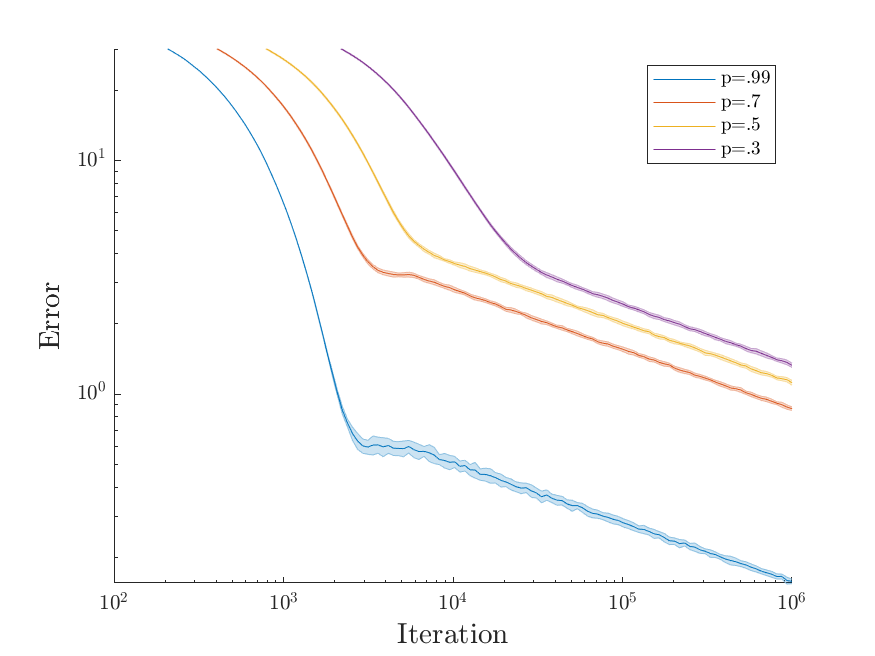}
    \caption{Log-log plot for Algorithm~\ref{alg:mSGDT} under the frontal slice missing data model, for varying values of $p \in \{.3, .5, .7, .99\}$. The $x$-axis is the iteration, and the $y$-axis is the error, defined as the Frobenius norm of $\cX^t - \cX_\ast$.}
    \label{fig:frontal}
\end{figure}

Figures~\ref{fig:regular},~\ref{fig:column}, and~\ref{fig:frontal} show the results for the three missing data models. For all values of $p$, we see an initial fast convergence during the initial iterations with the constant step size. Then, we see steady convergence after swapping to the varying step size, even for a $p$ value as small as 0.3.

\textbf{Remark.} We tested a few other criteria, such as swapping when the norm of the difference between successive iterates drops below a threshold. This criterion attempts to detect the convergence horizon by identifying a plateau in the iterates $\cX$. However, it did not do well in practice. The norm of the update direction
$$g(\cX^{t}) = \frac{1}{p^2} \left(\ctA_{i::}^{\edit{T}}\ast(\ctA_{i::}\ast\cX^t - p \cB_{i::}) \right) - \frac{1-p}{p^2}\cC \circ (\tilde \cA_{i::}^{\edit{T}} \ast \tilde \cA_{i::} ) \ast\cX^t$$
does not tend to zero because of the missing data, even if the current iterate $\cX^t$ is a good estimate of $\cX_\star.$ Thus, it is no surprise that this criterion did not perform well. We also tested a criterion where we estimate $\m E[\ctA_{i::}\ast\cX^t - p \cB_{i::}]$ over a sliding window of iterates, with the idea that its norm should decrease as $\cX^{\edit{T}}$ becomes more accurate. This criterion also failed, likely because the iterates differ greatly between iterations, making $\m E[\ctA_{i::}\ast\cX^t - p \cB_{i::}]$ difficult to estimate correctly over a window.

Thus, we chose to use a simple criterion that swaps after a fixed number of iterations. We chose to use 5000 iterations to better showcase the convergence obtained by the varying step size. In other settings, if it is feasible to estimate $\mu$ and $L_g$, then the convergence ratio $r = (1 - 2 \alpha \mu(1 - \alpha L_g))$ in Theorem~\ref{thm:fixedstepsize} can be estimated, and an appropriate fixed number of iterations can then be chosen.

\FloatBarrier

\subsection{Shuttle Video Data}
Here, we simulate mSGDT on the `shuttle.avi' video data set provided by MATLAB. The video data is stored in the tensor $\cX \in \m R^{288 \times 512 \times 121},$ with each frame being a frontal slice of $\cX.$ As in the previous subsection, we randomly generate Gaussian entries for the tensor $\cA$. We then zero out entries according to the uniform missing data model\edit{, the column block missing model with $b = 8$, or the frontal slice missing model,} with probabilities $p \in \{0.3, 0.7\}.$

Since the tensor $\cX$ is so large in dimension and norm, we initialize the first iterate $\cX^0$ to be filled with the value 128 (as entries in $\cX$ represent grayscale values and are thus contained in [0, 255]) to decrease the maximum possible initial error. Because of the large initial error, we choose a small initial constant step size of $p^2/10^6$ for the first 5000 iterations before swapping to a decreasing step size of $\sqrt{5000} p^2 / 10^6 \sqrt t.$ The $\sqrt{5000}$ factor was chosen to match the step size with the constant step size at the swap. We did not notice a significant improvement in swapping at a later iteration.

Figure~\ref{fig:shuttle} shows the first and last frames of the video data. Figures~\ref{fig:p3} and~\ref{fig:p7} show these same frames from the tensor obtained with mSGDT for \edit{the three missing data models with} different $p \in \{0.3, 0.7\}.$
\edit{In Figure~\ref{fig:p3}, which corresponds to probability $p = 0.3$,} we can see that even with such a low probability, the overall shapes within the video can be distinguished in both the first and last frames \edit{for all three missing data models}, but there are horizontal artifacts that distort the image.

\edit{In Figure~\ref{fig:p7}, which corresponds to the higher probability $p = 0.7,$} the images look much sharper, but there are still some faint artifacts. 
It is worth noting that these artifacts occur frequently in tensor recovery problems, see e.g.~\cite{castillo2025blockgaussseidelmethodstproduct} and~\cite{He2022-jd}.

\begin{figure}[h]
\centering
\begin{minipage}{0.45\linewidth}
  \includegraphics[width=\linewidth]{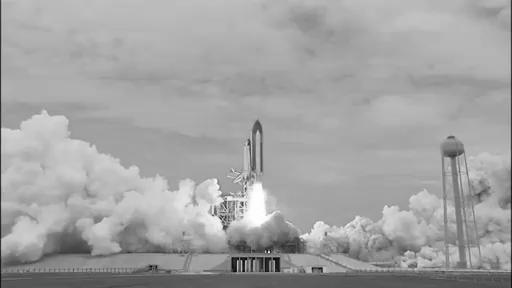}
  \end{minipage}
  \hfill
\begin{minipage}{0.45\linewidth}
\includegraphics[width = \linewidth]{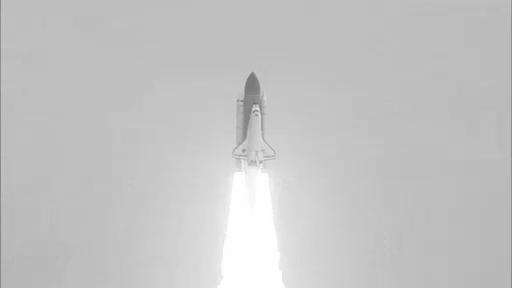}
\end{minipage}
\caption{First and last frames of shuttle dataset.}
\label{fig:shuttle}
\end{figure}

\begin{figure}[h]
    \centering
    \subfloat
    {\includegraphics[width=0.32\linewidth]{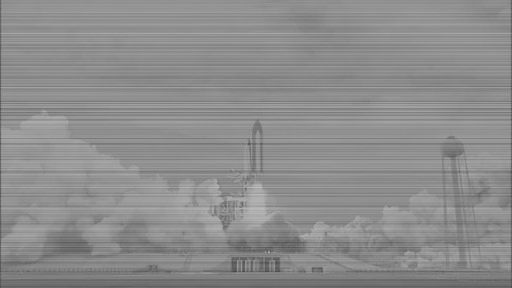}}
    \hfill
    \subfloat
    {\includegraphics[width=0.32\linewidth]{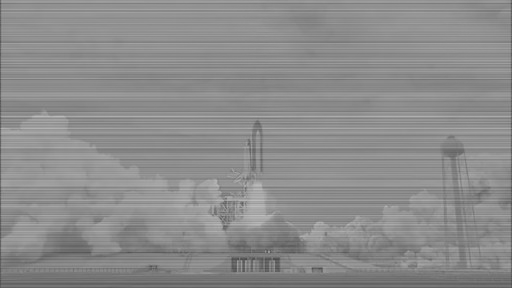}}
    \hfill
    \subfloat
    {\includegraphics[width=0.32\linewidth]{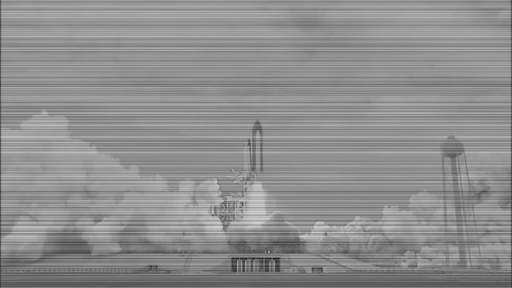}}
    \newline 
    \subfloat
    {\includegraphics[width=0.32\linewidth]{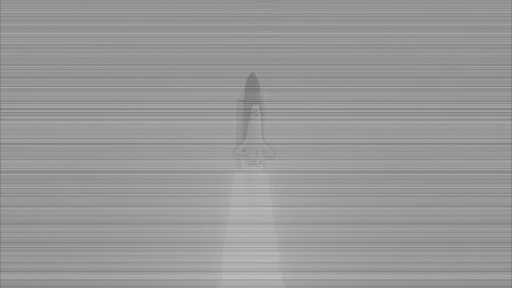}}
    \hfill
    \subfloat
    {\includegraphics[width=0.32\linewidth]{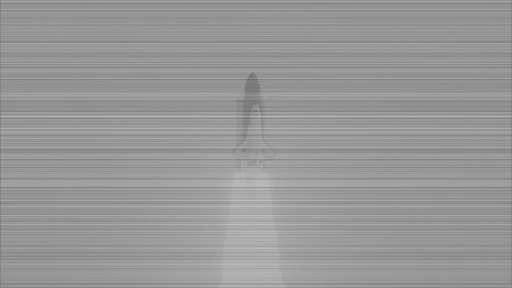}}
    \hfill
    \subfloat
    {\includegraphics[width=0.32\linewidth]{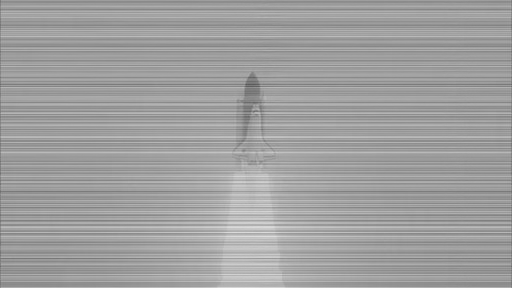}}
    \caption{First (top) and last frames (bottom) of reconstructed tensor. Entries in $\cA$ follow the uniform missing model (left), the column block missing model with $b = 8$ (middle), or the frontal slice missing model (right), each with $p = 0.3$.}
    \label{fig:p3}
\end{figure}

\begin{figure}[h]
    \centering
    \subfloat
    {\includegraphics[width=0.32\linewidth]{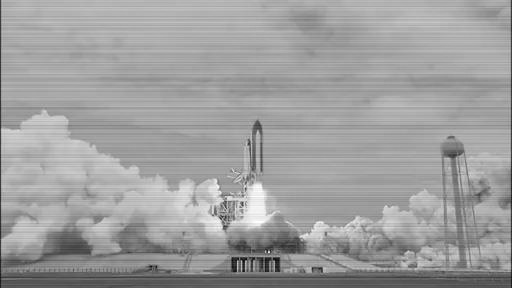}}
    \hfill
    \subfloat
    {\includegraphics[width=0.32\linewidth]{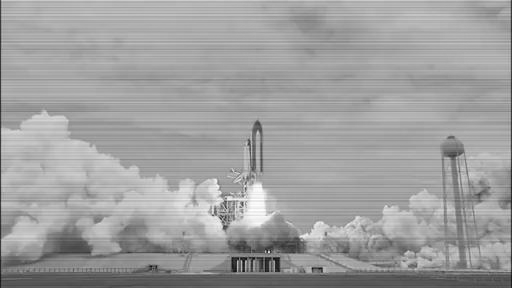}}
    \hfill
    \subfloat
    {\includegraphics[width=0.32\linewidth]{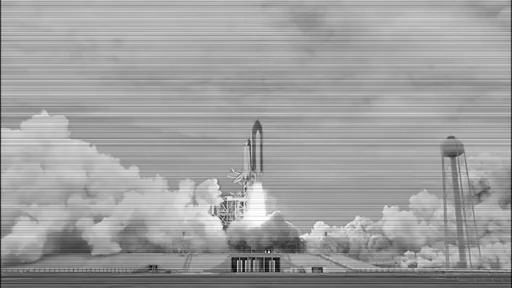}}
    \newline 
    \subfloat
    {\includegraphics[width=0.32\linewidth]{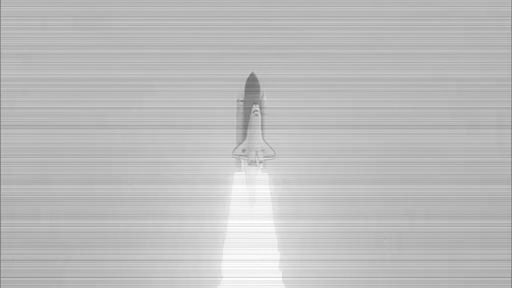}}
    \hfill
    \subfloat
    {\includegraphics[width=0.32\linewidth]{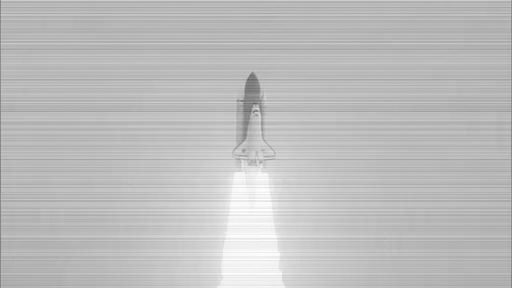}}
    \hfill
    \subfloat
    {\includegraphics[width=0.32\linewidth]{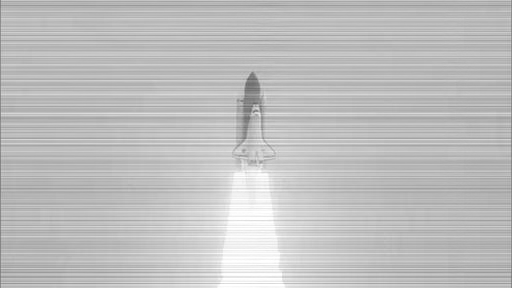}}
    \caption{First (top) and last frames (bottom) of reconstructed tensor. Entries in $\cA$ follow the uniform missing model (left), the column block missing model with $b = 8$ (middle), or the frontal slice missing model (right), each with $p = 0.7$.}
    \label{fig:p7}
\end{figure}

\FloatBarrier

\section{Conclusion}
In this paper, we presented mSGDT, a stochastic iterative method to solve tensor linear systems with missing data. We showed two theoretical convergence results under some general conditions that apply to a large class of missing data models. With a step size proportional to $1/\sqrt t$, mSGDT converges to the correct solution, with faster progress for large values of $p$. With a constant step size, mSGDT makes quick initial progress but eventually converges only to a convergence horizon. Our experiments on synthetic data and video data support these two results.

The key step in proving convergence under a varying step size was to choose an update direction $g$ satisfying~\eqref{eqn:c1} and~\eqref{eqn:c2} for all row indices so that the update direction is unbiased. For the constant step size, we needed to establish various regularity properties of $g$. We hope this framework can be used to study tensor systems with missing data in other settings. Future work could study these systems with products other than the $t$-product, such as the cosine transform product and other tensor-tensor products discussed in~\cite{KERNFELD2015545}. Alternatively, the convergence of iterative methods other than SGD, such as Gauss-Seidel variants, could be studied using a proof technique similar to that of Theorem~\ref{thm:fixedstepsize}.

\section*{Acknowledgements} DN and AX were partially supported by NSF DMS 2408912.

\bibliography{mSGDT}

@article{tang2023sketch,
  title={Sketch-and-project methods for tensor linear systems},
  author={Tang, Ling and Yu, Yajie and Zhang, Yanjun and Li, Hanyu},
  journal={Numerical Linear Algebra with Applications},
  volume={30},
  number={2},
  pages={e2470},
  year={2023},
  publisher={Wiley Online Library}
}

@article{luo2024frontal,
  title={Frontal Slice Approaches for Tensor Linear Systems},
  author={Luo, Hengrui and Ma, Anna},
  journal={arXiv preprint arXiv:2408.13547},
  year={2024}
}

@article{castillo2024randomized,
  title={Randomized Kaczmarz methods for t-product tensor linear systems with factorized operators},
  author={Castillo, Alejandra and Haddock, Jamie and Hartsock, Iryna and Hoyos, Paulina and Kassab, Lara and Kryshchenko, Alona and Larripa, Kamila and Needell, Deanna and Suryanarayanan, Shambhavi and Yacoubou-Djima, Karamatou},
  journal={arXiv preprint arXiv:2412.10583},
  year={2024}
}

@article{shamir2013stochastic,
  author = {O. Shamir and T. Zhang},
  title = {Stochastic gradient descent for non-smooth optimization: Convergence results and optimal averaging schemes},
  journal = {\em Proc. Int. Conf. Machine Learning},
  year = {2013},
  pages = {71--79}
}

@Article{ma2019stochasticgradientdescentlinear,
author = {Anna Ma and Deanna Needell},
title = {Stochastic Gradient Descent for Linear Systems with Missing Data},
journal = {Numerical Mathematics: Theory, Methods and Applications},
year = {2018},
volume = {12},
number = {1},
pages = {1--20},
abstract = {
Traditional methods for solving linear systems have quickly become impractical due to an increase in the size of available data. Utilizing massive amounts of data is further complicated when the data is incomplete or has missing entries. In this work, we address the obstacles presented when working with large data and incomplete data simultaneously. In particular, we propose to adapt the Stochastic Gradient Descent method to address missing data in linear systems. Our proposed algorithm, the Stochastic Gradient Descent for Missing Data method (mSGD), is introduced and theoretical convergence guarantees are provided. In addition, we include numerical experiments on simulated and real world data that demonstrate the usefulness of our method.
},
issn = {2079-7338},
doi = {https://doi.org/10.4208/nmtma.OA-2018-0066},
url = {http://global-sci.org/intro/article_detail/nmtma/12689.html}
}

@article{grotheer2023iterativesingulartubehard,
title = {Iterative singular tube hard thresholding algorithms for tensor recovery},
journal = {Inverse Problems and Imaging},
volume = {18},
number = {4},
pages = {889-907},
year = {2024},
issn = {1930-8337},
doi = {10.3934/ipi.2023059},
url = {https://www.aimsciences.org/article/id/659bc3cd92d3ad47ddccca57},
author = {Rachel Grotheer and Shuang Li and Anna Ma and Deanna Needell and Jing Qin},
keywords = {Tensor completion, tubal rank, hard thresholding, stochastic algorithm, image inpainting}
}

@article{KILMER2011641,
title = {Factorization strategies for third-order tensors},
journal = {Linear Algebra and its Applications},
volume = {435},
number = {3},
pages = {641-658},
year = {2011},
note = {Special Issue: Dedication to Pete Stewart on the occasion of his 70th birthday},
issn = {0024-3795},
doi = {https://doi.org/10.1016/j.laa.2010.09.020},
url = {https://www.sciencedirect.com/science/article/pii/S0024379510004830},
author = {Misha E. Kilmer and Carla D. Martin},
keywords = {Multilinear algebra, Tensor decomposition, Singular value decomposition, Multidimensional arrays},
abstract = {Operations with tensors, or multiway arrays, have become increasingly prevalent in recent years. Traditionally, tensors are represented or decomposed as a sum of rank-1 outer products using either the CANDECOMP/PARAFAC (CP) or the Tucker models, or some variation thereof. Such decompositions are motivated by specific applications where the goal is to find an approximate such representation for a given multiway array. The specifics of the approximate representation (such as how many terms to use in the sum, orthogonality constraints, etc.) depend on the application. In this paper, we explore an alternate representation of tensors which shows promise with respect to the tensor approximation problem. Reminiscent of matrix factorizations, we present a new factorization of a tensor as a product of tensors. To derive the new factorization, we define a closed multiplication operation between tensors. A major motivation for considering this new type of tensor multiplication is to devise new types of factorizations for tensors which can then be used in applications. Specifically, this new multiplication allows us to introduce concepts such as tensor transpose, inverse, and identity, which lead to the notion of an orthogonal tensor. The multiplication also gives rise to a linear operator, and the null space of the resulting operator is identified. We extend the concept of outer products of vectors to outer products of matrices. All derivations are presented for third-order tensors. However, they can be easily extended to the order-p (p>3) case. We conclude with an application in image deblurring.}
}

@article{needell2014stochastic,
  title={Stochastic gradient descent, weighted sampling, and the randomized Kaczmarz algorithm},
  author={Needell, Deanna and Ward, Rachel and Srebro, Nati},
  journal={Advances in neural information processing systems},
  volume={27},
  year={2014}
}

@article{ma2020randomizedkaczmarztensorlinear,
author={Ma, Anna
and Molitor, Denali},
title={Randomized Kaczmarz for tensor linear systems},
journal={BIT Numerical Mathematics},
year={2022},
month={Mar},
day={01},
volume={62},
number={1},
pages={171-194},
abstract={Solving linear systems of equations is a fundamental problem in mathematics. When the linear system is so large that it cannot be loaded into memory at once, iterative methods such as the randomized Kaczmarz method excel. Here, we extend the randomized Kaczmarz method to solve multi-linear (tensor) systems under the tensor--tensor t-product. We present convergence guarantees for tensor randomized Kaczmarz in two ways: using the classical matrix randomized Kaczmarz analysis and taking advantage of the tensor--tensor t-product structure. We demonstrate experimentally that the tensor randomized Kaczmarz method converges faster than traditional randomized Kaczmarz applied to a naively matricized version of the linear system. In addition, we draw connections between the proposed algorithm and a previously known extension of the randomized Kaczmarz algorithm for matrix linear systems.},
issn={1572-9125},
doi={10.1007/s10543-021-00877-w},
url={https://doi.org/10.1007/s10543-021-00877-w}
}

@inproceedings{maehara2016expected,
  title={Expected tensor decomposition with stochastic gradient descent},
  author={Maehara, Takanori and Hayashi, Kohei and Kawarabayashi, Ken-ichi},
  booktitle={Proceedings of the AAAI Conference on Artificial Intelligence},
  volume={30(1)},
  year={2016}
}

@article{kolda2020stochastic,
  title={Stochastic gradients for large-scale tensor decomposition},
  author={Kolda, Tamara G and Hong, David},
  journal={SIAM Journal on Mathematics of Data Science},
  volume={2},
  number={4},
  pages={1066--1095},
  year={2020},
  publisher={SIAM}
}

@misc{newman2018stabletensorneuralnetworks,
      title={Stable Tensor Neural Networks for Rapid Deep Learning}, 
      author={Elizabeth Newman and Lior Horesh and Haim Avron and Misha Kilmer},
      year={2018},
      eprint={1811.06569},
      archivePrefix={arXiv},
      primaryClass={cs.LG},
      url={https://arxiv.org/abs/1811.06569}, 
}

@ARTICLE{yin2019subspaceclustering,
  author={Yin, Ming and Gao, Junbin and Xie, Shengli and Guo, Yi},
  journal={IEEE Transactions on Neural Networks and Learning Systems}, 
  title={Multiview Subspace Clustering via Tensorial t-Product Representation}, 
  year={2019},
  volume={30},
  number={3},
  pages={851-864},
  keywords={Tensile stress;Correlation;Sparse matrices;Clustering algorithms;Convolution;Image color analysis;Data models;Low-rank representation;multiview clustering;t-linear;t-product;tensor space},
  doi={10.1109/TNNLS.2018.2851444}}

@ARTICLE{hu2017videocompletion,
  author={Hu, Wenrui and Tao, Dacheng and Zhang, Wensheng and Xie, Yuan and Yang, Yehui},
  journal={IEEE Transactions on Neural Networks and Learning Systems}, 
  title={The Twist Tensor Nuclear Norm for Video Completion}, 
  year={2017},
  volume={28},
  number={12},
  pages={2961-2973},
  keywords={Tensile stress;Matrix decomposition;Computational modeling;Cameras;Video sequences;Estimation;Correlation;Low-rank tensor estimation (LRTE);tensor multirank;tensor nuclear norm (TNN);twist tensor;video completion},
  doi={10.1109/TNNLS.2016.2611525}}

@article{chen2021tensorrecovery,
author = {Chen, Xuemei and Qin, Jing},
title = {Regularized Kaczmarz Algorithms for Tensor Recovery},
journal = {SIAM Journal on Imaging Sciences},
volume = {14},
number = {4},
pages = {1439-1471},
year = {2021},
doi = {10.1137/21M1398562},

URL = { 
    
        https://doi.org/10.1137/21M1398562
    
    

},
eprint = { 
    
        https://doi.org/10.1137/21M1398562
    
    

}
,
    abstract = { Tensor recovery has recently arisen in a lot of application fields, such as transportation, medical imaging, and remote sensing. Under the assumption that signals possess sparse and/or low-rank structures, many tensor recovery methods have been developed to apply various regularization techniques together with the operator-splitting type of algorithms. Due to the unprecedented growth of data, it becomes increasingly desirable to use streamlined algorithms to achieve real-time computation, such as stochastic optimization algorithms that have recently emerged as an efficient family of methods in machine learning. In this work, we propose a novel algorithmic framework based on the Kaczmarz algorithm for tensor recovery. We provide thorough convergence analysis and its applications from the vector case to the tensor one. Numerical results on a variety of tensor recovery applications, including sparse signal recovery, low-rank tensor recovery, image inpainting, and deconvolution, illustrate the enormous potential of the proposed methods. }
}

@INPROCEEDINGS{papastergiou2017tensorcompletion,
  author={Papastergiou, T. and Megalooikonomou, V.},
  booktitle={2017 IEEE International Conference on Big Data (Big Data)}, 
  title={A distributed proximal gradient descent method for tensor completion}, 
  year={2017},
  volume={},
  number={},
  pages={2056-2065},
  keywords={Tensile stress;Optimization;Training;Distributed databases;Matrix decomposition;Convergence;Image reconstruction;Tensor Completion;Distributed Algorithms;Stochastic Gradient Descent;Optimization;Proximal Algorithms},
  doi={10.1109/BigData.2017.8258152}}

@inproceedings{Ma2025Stocha,
title = {Stochastic Gradient Descent on Tensors with Missing Data},
author={Ma, Anna and Needell, Deanna and Xue, Alexander},
booktitle={AAAI 2025 Workshop CoLoRAI},
note={To appear},
year={2025}}

@ARTICLE{He2022-jd,
  title     = "Tensor completion via A generalized transformed tensor T-product
               decomposition without {t-SVD}",
  author    = "He, Hongjin and Ling, Chen and Xie, Wenhui",
  journal   = "J. Sci. Comput.",
  publisher = "Springer Science and Business Media LLC",
  volume    =  93,
  number    =  2,
  month     =  nov,
  year      =  2022,
  language  = "en"
}

@misc{castillo2025blockgaussseidelmethodstproduct,
      title={Block Gauss-Seidel methods for t-product tensor regression}, 
      author={Alejandra Castillo and Jamie Haddock and Iryna Hartsock and Paulina Hoyos and Lara Kassab and Alona Kryshchenko and Kamila Larripa and Deanna Needell and Shambhavi Suryanarayanan and Karamatou Yacoubou Djima},
      year={2025},
      eprint={2503.19155},
      archivePrefix={arXiv},
      primaryClass={math.NA},
      url={https://arxiv.org/abs/2503.19155}, 
}

@inproceedings{Novikov2015,
author = {Novikov, Alexander and Podoprikhin, Dmitry and Osokin, Anton and Vetrov, Dmitry},
title = {Tensorizing neural networks},
year = {2015},
publisher = {MIT Press},
address = {Cambridge, MA, USA},
abstract = {Deep neural networks currently demonstrate state-of-the-art performance in several domains. At the same time, models of this class are very demanding in terms of computational resources. In particular, a large amount of memory is required by commonly used fully-connected layers, making it hard to use the models on low-end devices and stopping the further increase of the model size. In this paper we convert the dense weight matrices of the fully-connected layers to the Tensor Train [17] format such that the number of parameters is reduced by a huge factor and at the same time the expressive power of the layer is preserved. In particular, for the Very Deep VGG networks [21] we report the compression factor of the dense weight matrix of a fully-connected layer up to 200000 times leading to the compression factor of the whole network up to 7 times.},
booktitle = {Proceedings of the 29th International Conference on Neural Information Processing Systems - Volume 1},
pages = {442–450},
numpages = {9},
location = {Montreal, Canada},
series = {NIPS'15}
}

@inproceedings{Karatzoglou2010,
author = {Karatzoglou, Alexandros and Amatriain, Xavier and Baltrunas, Linas and Oliver, Nuria},
title = {Multiverse recommendation: n-dimensional tensor factorization for context-aware collaborative filtering},
year = {2010},
isbn = {9781605589060},
publisher = {Association for Computing Machinery},
address = {New York, NY, USA},
url = {https://doi.org/10.1145/1864708.1864727},
doi = {10.1145/1864708.1864727},
abstract = {Context has been recognized as an important factor to consider in personalized Recommender Systems. However, most model-based Collaborative Filtering approaches such as Matrix Factorization do not provide a straightforward way of integrating context information into the model. In this work, we introduce a Collaborative Filtering method based on Tensor Factorization, a generalization of Matrix Factorization that allows for a flexible and generic integration of contextual information by modeling the data as a User-Item-Context N-dimensional tensor instead of the traditional 2D User-Item matrix. In the proposed model, called Multiverse Recommendation, different types of context are considered as additional dimensions in the representation of the data as a tensor. The factorization of this tensor leads to a compact model of the data which can be used to provide context-aware recommendations.We provide an algorithm to address the N-dimensional factorization, and show that the Multiverse Recommendation improves upon non-contextual Matrix Factorization up to 30\% in terms of the Mean Absolute Error (MAE). We also compare to two state-of-the-art context-aware methods and show that Tensor Factorization consistently outperforms them both in semi-synthetic and real-world data - improvements range from 2.5\% to more than 12\% depending on the data. Noticeably, our approach outperforms other methods by a wider margin whenever more contextual information is available.},
booktitle = {Proceedings of the Fourth ACM Conference on Recommender Systems},
pages = {79–86},
numpages = {8},
keywords = {collaborative filtering, context, tensor factorization},
location = {Barcelona, Spain},
series = {RecSys '10}
}

@inproceedings{bottou2010large,
  title={Large-scale machine learning with stochastic gradient descent},
  author={Bottou, L{\'e}on},
  booktitle={Proceedings of COMPSTAT'2010: 19th International Conference on Computational StatisticsParis France, August 22-27, 2010 Keynote, Invited and Contributed Papers},
  pages={177--186},
  year={2010},
  organization={Springer}
}

@article{amiri2020machine,
  title={Machine learning at the wireless edge: Distributed stochastic gradient descent over-the-air},
  author={Amiri, Mohammad Mohammadi and G{\"u}nd{\"u}z, Deniz},
  journal={IEEE Transactions on Signal Processing},
  volume={68},
  pages={2155--2169},
  year={2020},
  publisher={IEEE}
}

@article{KERNFELD2015545,
title = {Tensor–tensor products with invertible linear transforms},
journal = {Linear Algebra and its Applications},
volume = {485},
pages = {545-570},
year = {2015},
issn = {0024-3795},
doi = {https://doi.org/10.1016/j.laa.2015.07.021},
url = {https://www.sciencedirect.com/science/article/pii/S0024379515004358},
author = {Eric Kernfeld and Misha Kilmer and Shuchin Aeron},
keywords = {Tensor, Multiway, SVD, DCT, Linear transformation, Module},
abstract = {Research in tensor representation and analysis has been rising in popularity in direct response to a) the increased ability of data collection systems to store huge volumes of multidimensional data and b) the recognition of potential modeling accuracy that can be provided by leaving the data and/or the operator in its natural, multidimensional form. In recent work [1], the authors introduced the notion of the t-product, a generalization of matrix multiplication for tensors of order three, which can be extended to multiply tensors of arbitrary order [2]. The multiplication is based on a convolution-like operation, which can be implemented efficiently using the Fast Fourier Transform (FFT). The corresponding linear algebraic framework from the original work was further developed in [3], and it allows one to elegantly generalize all classical algorithms from numerical linear algebra. In this paper, we extend this development so that tensor–tensor products can be defined in a so-called transform domain for any invertible linear transform. In order to properly motivate this transform-based approach, we begin by defining a new tensor–tensor product alternative to the t-product. We then show that it can be implemented efficiently using DCTs, and that subsequent definitions and factorizations can be formulated by appealing to the transform domain. Using this new product as our guide, we then generalize the transform-based approach to any invertible linear transform. We introduce the algebraic structures induced by each new multiplication in the family, which is that of C⁎-algebras and modules. Finally, in the spirit of [4], we give a matrix–algebra based interpretation of the new family of tensor–tensor products, and from an applied perspective, we briefly discuss how to choose a transform. We demonstrate the convenience of our new framework within the context of an image deblurring problem and we show the potential for using one of these new tensor–tensor products and resulting tensor-SVD for hyperspectral image compression.}
}

\section{Appendix}
Lemma~\ref{lem:sub-multiplicative} proves a straightforward sub-multiplicative property of the Frobenius norm. Lemma~\ref{lem:G} gives a bound on $\m E[\|g(\cX)\|^2]$, Lemma~\ref{lem:G*} gives a bound on $\m E[\|g(\cX_\star)\|^2],$ and Lemma~\ref{lem:Lg} gives a Lipschitz constant for $g$. These lemmas closely follow the proof steps done in~\cite{Ma2025Stocha} for the uniform missing data model, adapted to the other models. Lemma~\ref{lem:strongconvex} is a restatement of a lemma appearing in~\cite{Ma2025Stocha} that computes the strongly convex parameter $\mu$ of the objective function $F$. Lastly, Lemma~\ref{lem:Gver} finds an $f$ such that $\nabla f(\cX) = g(\cX).$

\begin{lemma} \label{lem:sub-multiplicative} The Frobenius norm $\|\cX\| = \sqrt{\sum_{i, j, k}  \cX_{i, j,k} ^2}$ is $\sqrt n$-sub-multiplicative. That is, 
$$\|\cX\ast \cY\| \le \sqrt n \|\cX\|\|\cY\|.$$
Moreover, $\sqrt n$ is the best possible factor.
\end{lemma}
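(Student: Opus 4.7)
The plan is to reduce the statement to a standard matrix inequality by exploiting the defining identity $\cX \ast \cY = \fold{\bcirc{\cX}\,\unfold{\cY}}$. Since both $\fold{\cdot}$ and $\unfold{\cdot}$ merely rearrange entries, they preserve the Frobenius norm, so
\[ \|\cX \ast \cY\| = \|\bcirc{\cX}\,\unfold{\cY}\|_F. \]
Then I would invoke the classical submultiplicativity of the Frobenius norm for matrices, $\|MN\|_F \le \|M\|_F \|N\|_F$, to bound the right-hand side by $\|\bcirc{\cX}\|_F \cdot \|\unfold{\cY}\|_F$.

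Next I would compute each of these two factors in terms of the tensor norms. Clearly $\|\unfold{\cY}\|_F = \|\cY\|$ since unfolding is just a reshape. For the block-circulant factor, the key observation is that each of the $n$ frontal slices $\mX_0, \dots, \mX_{n-1}$ of $\cX$ appears in exactly $n$ of the $n^2$ block positions of $\bcirc{\cX}$ (once per block row, by the circulant shift). Summing squared Frobenius norms block by block gives $\|\bcirc{\cX}\|_F^2 = n \sum_{k=0}^{n-1}\|\mX_k\|_F^2 = n\|\cX\|^2$, hence $\|\bcirc{\cX}\|_F = \sqrt{n}\,\|\cX\|$. Combining the two factors yields the desired $\sqrt{n}$-submultiplicative bound.

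For the claim that $\sqrt{n}$ is the best possible factor, I would exhibit an extremal example rather than argue abstractly. Take $\cX, \cY \in \Re^{1\times 1\times n}$ with every entry equal to $1$. Then $\bcirc{\cX}$ is the $n\times n$ all-ones matrix, $\unfold{\cY}$ is the all-ones column vector of length $n$, their product is $n\cdot \mathbf{1}$, and so $\|\cX\ast\cY\| = n\sqrt{n}$. On the other hand $\|\cX\|=\|\cY\|=\sqrt{n}$, so $\sqrt{n}\|\cX\|\|\cY\| = n\sqrt{n}$, achieving equality. This shows no constant smaller than $\sqrt{n}$ can work. The whole argument is short, and the only step requiring any real thought is the combinatorial bookkeeping that counts how often each frontal slice appears in $\bcirc{\cX}$; everything else is a direct consequence of standard matrix norm facts.
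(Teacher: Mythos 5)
Your proof is correct and follows essentially the same route as the paper: rewrite $\cX\ast\cY$ via $\bcirc{\cX}\unfold{\cY}$, apply matrix Frobenius sub-multiplicativity together with $\|\bcirc{\cX}\|_F=\sqrt{n}\,\|\cX\|$, and exhibit an all-ones example for tightness (yours is the $1\times1\times n$ special case of the paper's all-ones tensors, and both achieve equality). The only difference is that you spell out the block-counting step that the paper leaves implicit, which is a welcome clarification rather than a new approach.
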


\begin{proof}  We have 
$$ \unfold{\cX \ast \cY} =  \begin{pmatrix}
        \mX_0 & \mX_{n-1} & \mX_{n-2} & \ldots & \mX_{1}\\
        \mX_{1} & \mX_{0} & \mX_{n-1} & \ldots & \mX_{2}\\
        \vdots & \vdots & \vdots & \ddots & \vdots \\
        \mX_{n-1} & \mX_{n-2} & \mX_{n-3} & \ldots & \mX_{0}\\
        \end{pmatrix} \begin{pmatrix}
            \mY_0 \\ \mY_1 \\ \vdots \\ \mY_{n-1}
        \end{pmatrix}.
        $$
    Thus, since the Frobenius norm is sub-multiplicative for matrices,
    \begin{align*}
        \| \cX \ast \cY \| &\le \sqrt n \| \cX \| \| \cY\|,
    \end{align*} 
    as desired.

For showing that $\sqrt n$ is the best possible factor, consider $\cX \in \m R^{m \times \ell \times n}$ and $\cY \in \m R^{\ell \times q \times n}$ with every entry of both tensors equal to 1. Then every entry of $\unfold{\cX \ast \cY} \in \m R^{m \times q \times n}$ is $\ell n$, so $$ \| \cX \ast \cY \| = \ell n\sqrt {m q n},$$ while $$\|\cX \| \| \cY\| = \sqrt{m \ell n} \sqrt{ \ell q n} = \ell n\sqrt{ m q}.$$

\end{proof}

\begin{lemma}[edited from~\cite{Ma2025Stocha}] \label{lem:G} Suppose Assumption~\ref{assump:model} holds. Let $g$ be defined as in~\eqref{eqn:g}, and suppose that $\cC$ is a tensor of zeros and ones satisfying~\eqref{eqn:c1} and~\eqref{eqn:c2} for all $0 \le i < m$. Let $R$ be an upper bound on $\|\cX\|$. Then
    $\m E[\|g(\cX)\|^2] \le G,$ where 
    $$G =\f {4 n^2 R^2} {p^3m} \sum_{i = 0}^{m-1}  \| \cA_{i::} \|^4 +  \f {4 n^{3/2} R} {p^2m} \sum_{i = 0}^{m-1} \| \cA_{i::} \|^3 \| \cB_{i::} \| + \f {2 n}{p^2m} \sum_{i = 0}^{m-1}  \| \cA_{i::} \|^2 \|\cB_{i::} \|^2.$$
    
\end{lemma}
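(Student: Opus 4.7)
The plan is to split $g(\cX)$ into a ``tensor-times-$\cX$'' part and an ``inhomogeneous'' part, apply the triangle inequality, and bound each piece using the sub-multiplicativity from Lemma~\ref{lem:sub-multiplicative} together with a second-moment estimate for $\ctA$. Combining the two terms involving $\ctA_{i::}^* \ast \ctA_{i::}$ in~\eqref{eqn:g} and using that $\cC$ is a $\{0,1\}$-tensor so $\mathbb{1} - (1-p)\cC$ has entries in $\{p, 1\}$, I would write
$$g(\cX) = \underbrace{\f{1}{p^2}\bigl[(\mathbb{1} - (1-p)\cC) \circ (\ctA_{i::}^* \ast \ctA_{i::})\bigr] \ast \cX}_{=: A} \;-\; \underbrace{\f{1}{p}\,\ctA_{i::}^* \ast \cB_{i::}}_{=: B}.$$
The three terms of $G$ will then come from the expansion $\|g(\cX)\|^2 \le (\|A\| + \|B\|)^2 = \|A\|^2 + 2\|A\|\|B\| + \|B\|^2$.

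Applying Lemma~\ref{lem:sub-multiplicative} twice to each piece yields $\|A\|^2 \le (nR^2/p^4)\|\ctA_{i::}^* \ast \ctA_{i::}\|^2 \le (n^2 R^2/p^4)\|\ctA_{i::}\|^4$ and $\|B\|^2 \le (n/p^2)\|\ctA_{i::}\|^2\|\cB_{i::}\|^2$, and multiplying square roots gives $\|A\|\|B\| \le (n^{3/2}R/p^3)\|\ctA_{i::}\|^3\|\cB_{i::}\|$. At this point, the naive pointwise estimate $\|\ctA_{i::}\| \le \|\cA_{i::}\|$ would leave a $1/p^4$ prefactor on the first term, which is weaker than the claimed $1/p^3$; a more careful expectation is needed.

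The refinement uses the fact that for any four index-triples $a_1, a_2, a_3, a_4$, $\m E_{\cD}[\ctA_{a_1}\ctA_{a_2}\ctA_{a_3}\ctA_{a_4}] \le p\,|\cA_{a_1}\cA_{a_2}\cA_{a_3}\cA_{a_4}|$, because $\ctA_a = D_a \cA_a$ with each $D_a$ Bernoulli($p$) and $\Pr(D_{a_1} = \cdots = D_{a_4} = 1) \le \Pr(D_{a_1} = 1) = p$. Expanding $\|\ctA_{i::}\|^4 = (\sum_{abc}\ctA_{abc}^2)^2$ and applying this term-by-term gives $\m E[\|\ctA_{i::}\|^4] \le p\|\cA_{i::}\|^4$; by Cauchy--Schwarz, $\m E[\|\ctA_{i::}\|^3] \le p\|\cA_{i::}\|^3$; the estimate $\|\ctA_{i::}\|^2 \le \|\cA_{i::}\|^2$ is used deterministically for the $\|B\|^2$ term. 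Together these yield $\m E[\|A\|^2] \le (n^2 R^2/p^3)\|\cA_{i::}\|^4$, $\m E[\|A\|\|B\|] \le (n^{3/2}R/p^2)\|\cA_{i::}\|^3\|\cB_{i::}\|$, and $\m E[\|B\|^2] \le (n/p^2)\|\cA_{i::}\|^2\|\cB_{i::}\|^2$.

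Averaging over the uniform choice of row index $i$ then converts each $\|\cA_{i::}\|^k$ into $(1/m)\sum_{i=0}^{m-1}\|\cA_{i::}\|^k$, and collecting the three contributions produces the stated bound on $G$. The main obstacle is the $p$-bookkeeping: the pointwise bound $|\ctA_{abc}| \le |\cA_{abc}|$ is too crude, so one must use the second-moment estimate above to recover one factor of $p$. A subtle point is that Assumption~\ref{assump:model} does not require the mask entries to be independent, so the right uniform substitute is $\Pr(D_{a_1} = \cdots = D_{a_k} = 1) \le p$ (not $p^k$), which is exactly what the bookkeeping relies on to remain model-agnostic.
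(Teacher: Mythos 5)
Your proof is correct, but it takes a genuinely different route from the paper's. You merge the two $\ctA_{i::}^*\ast\ctA_{i::}$ terms of~\eqref{eqn:g} into the single tensor $(\mathbb{1}-(1-p)\cC)\circ(\ctA_{i::}^*\ast\ctA_{i::})$, whose entrywise weights lie in $\{p,1\}$, and then bound everything by two applications of Lemma~\ref{lem:sub-multiplicative} together with the model-agnostic moment estimates $\m E_{\cD}[\|\ctA_{i::}\|^4]\le p\|\cA_{i::}\|^4$ and $\m E_{\cD}[\|\ctA_{i::}\|^3]\le p\|\cA_{i::}\|^3$, both valid because $\Pr(D_{a_1}=\dots=D_{a_k}=1)\le p$ regardless of the dependence structure of the mask, which is exactly the right reading of Assumption~\ref{assump:model}. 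Notably, your argument never invokes~\eqref{eqn:c1} or~\eqref{eqn:c2}: only the $\{0,1\}$ structure of $\cC$ is used, and your constants come out smaller than the stated $G$ (coefficients $1$, $2$, $1$ in place of $4$, $4$, $2$), so the lemma follows a fortiori. The paper instead keeps the original two-term split via $\|a-b\|^2\le 2\|a\|^2+2\|b\|^2$ and uses~\eqref{eqn:c1} and~\eqref{eqn:c2} to evaluate $\m E_{\cD}[\|\ctA_{i::}\ast\cX-p\cB_{i::}\|^2]$ exactly, arriving at the intermediate bound~\eqref{eqn:lem5help}, which rewrites as $p^2\|\cA_{i::}\ast\cX-\cB_{i::}\|^2+(p-p^2)n\|\cA_{i::}\|^2R^2$. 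What that extra work buys is reusability: Lemma~\ref{lem:G*} plugs $\cX=\cX_\star$ into that identity so the residual term vanishes, yielding the $\cB$-free constant $G_\star$ needed for Theorem~\ref{thm:fixedstepsize}. Your regrouping into the $A$ and $B$ pieces destroys that residual structure, so while it cleanly proves Lemma~\ref{lem:G} (indeed with better constants), it would not give Lemma~\ref{lem:G*} without reverting to the paper's decomposition.
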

\begin{proof}
We have
\begin{align*}
\E [\| &g(\cX) \|^2]  =  \E \bigg[ \bigg \| \frac{1}{p^2} \left(\tilde \cA_{i::}^{\edit{T}}\ast(\tilde \cA_{i::}\ast\cX - p \cB_{i::}) \right) - \frac{(1-p)}{p^2}\cC \circ(\tilde \cA_{i::}^{\edit{T}}\ast\tilde \cA_{i::})\ast \cX \bigg \| ^2 \bigg] \\ 
&{\leq} \frac{2}{p^4} \underbrace{\E \left[\norm{ \tilde \cA_{i::}^{\edit{T}}\ast(\tilde \cA_{i::}\ast\cX - p \cB_{i::}) }^2 \right]}_\text{(A)} + \frac{2(1-p)^2}{p^4} \underbrace{\E \left[\| \cC \circ(\tilde \cA_{i::}^{\edit{T}}\ast\tilde \cA_{i::})\ast\cX \|^2\right]}_\text{(B)}. \numberthis \label{eqn:g^2inequality}
\end{align*} 
Let's bound (A) first. 
\begin{align*} 
  \E \left[\norm{ \tilde \cA_{i::}^{\edit{T}}\ast(\tilde \cA_{i::}\ast\cX - p \cB_{i::}) }^2 \right] & \leq n \E \left[\| \tilde \cA_{i::} \|^2 \|\tilde \cA_{i::}\ast\cX - p \cB_{i::}\|^2\right] \\ 
& \leq n\E \left[\norm{ \cA_{i::} }^2 \|\tilde \cA_{i::}\ast\cX - p \cB_{i::}\|^2\right]\\
&= n \E_i \big [\norm{ \cA_{i::} }^2 \E_{\cD} [\|\tilde \cA_{i::}\ast\cX - p \cB_{i::}\|^2] \big ], \numberthis \label{eqn:A}
\end{align*}
where the first inequality holds by Lemma~\ref{lem:sub-multiplicative}, the second by the trivial bound $\|\tilde \cA_{i::}\|^2 \le \| \cA_{i::}\|^2$, and the last by recalling $\m E[\cdot] = \m E_i[ \m E_{\cD}[\cdot]]$ and noting that $\cA_{i::}$ is independent of the binary mask.

Note
\begin{align*}
    \E_{\cD} [\|\tilde \cA_{i::}\ast\cX - p \cB_{i::}\|^2] &= 
    \E_{\cD} [\|\tilde \cA_{i::}\ast\cX\|^2] - 2p \m E_{\cD} \langle \tilde \cA_{i::} \ast \cX, \cB_{i::} \rangle + p^2 \|\cB_{i::}\|^2.
\end{align*}
For the first term, we compute
\begin{align*}
    \E_{\cD} [\|\tilde \cA_{i::}\ast\cX\|^2] &= 
    \m E_{\cD} \langle \tilde \cA_{i::} \ast \cX, \tilde \cA_{i::} \ast \cX \rangle \\
    &\overset{(a)}{=} \m E_{\cD} \langle \cX^{\edit{T}} \ast \tilde \cA_{i::}^{\edit{T}} \ast \tilde \cA_{i::} \ast \cX, \cI \rangle \\ 
    &= \langle \cX^{\edit{T}} \ast \m E_{\cD} [\tilde \cA_{i::}^{\edit{T}} \ast \tilde \cA_{i::}] \ast \cX, \cI \rangle \\ 
    &\overset{(b)}{=} p^2 \langle \cX^{\edit{T}} \ast \cA_{i::}^{\edit{T}} \ast \cA_{i::} \ast \cX, \cI \rangle + (p - p^2) \langle \cX^{\edit{T}} \ast (\cC \circ (\cA_{i::}^{\edit{T}} \ast \cA_{i::})) \ast \cX, \cI \rangle \\ 
    &= p^2 \langle \cA_{i::} \ast \cX, \cA_{i::} \ast \cX \rangle + (p - p^2) \langle \cC \circ ( \cA_{i::}^{\edit{T}} \ast \cA_{i::}) \ast \cX, \cX \rangle \\
    &\overset{(c)}{\le} p^2 \| \cA_{i::} \ast \cX \|^2 + (p - p^2)  \| \cC \circ (\cA_{i::}^{\edit{T}} \ast \cA_{i::}) \ast \cX  \| R \\
    &\overset{(d)}{\le} p^2 \| \cA_{i::} \ast \cX \|^2 + (p - p^2) n  \|\cA_{i::} \|^2 R^2
\end{align*}
Here, (a) moves the $\tilde \cA_{i::} \ast \cX$ on the right to the left by using a transpose, leaving the identity tensor $\cI$. (b) applies~\eqref{eqn:c1} and~\eqref{eqn:c2}. (c) uses Cauchy-Schwarz, and (d) uses Lemma~\ref{lem:sub-multiplicative} and the assumption that $\cC$ consists of zeros and ones. Thus,
\begin{align*}
    \E_{\cD} [\|\tilde \cA_{i::}\ast\cX - p \cB_{i::}\|^2] &\le p^2 \| \cA_{i::} \ast \cX \|^2 + (p - p^2) n  \|\cA_{i::} \|^2 R^2
     - 2p \m E_{\cD} \langle \tilde \cA_{i::} \ast \cX, \cB_{i::} \rangle + p^2 \|\cB_{i::}\|^2 \\ 
     &= p^2 \| \cA_{i::} \ast \cX \|^2 + (p - p^2) n  \| \cA_{i::} \|^2 R^2 - 2p^2 \langle \cA_{i::} \ast \cX, \cB_{i::} \rangle + p^2 \|\cB_{i::}\|^2. \numberthis \label{eqn:lem5help}
\end{align*}
Therefore, for (A) we obtain the bound
\begin{align*} 
\E &\left[\norm{ \tilde  \cA_{i::}^{\edit{T}}\ast(\tilde \cA_{i::}\ast\cX - p \cB_{i::}) }^2 \right]
\\&
\le n \m E_{i} \left [ \|\cA_{i::}\|^2 \left( p^2 \| \cA_{i::} \ast \cX \|^2 + (p - p^2) n  \| \cA_{i::} \|^2 R^2 - 2p^2 \langle \cA_{i::} \ast \cX, \cB_{i::} \rangle + p^2 \|\cB_{i::}\|^2\right) \right]
\\
&\le n \m E_i \left[ p^2  n R^2 \| \cA_{i::} \|^4 + (p - p^2)n R^2 \|\cA_{i::} \|^4 + 2p^2 \sqrt n R\|\cA_{i::} \|^3 \| \cB_{i::} \| + p^2 \| \cA_{i::} \|^2 \| \cB_{i::} \|^2 \right] \\
&= \sum_{i = 0}^{m-1} \f {p n^2 R^2} {m} \| \cA_{i::} \|^4 + \sum_{i = 0}^{m-1} \f {2 p^2  n^{3/2} R} m \| \cA_{i::} \|^3 \| \cB_{i::} \| + \sum_{i = 0}^{m-1} \f {p^2 n}{m} \| \cA_{i::} \|^2 \|\cB_{i::} \|^2.
\end{align*}
The second inequality applies Cauchy-Schwarz once and Lemma~\ref{lem:sub-multiplicative} multiple times.

For (B), 
\begin{align*} 
\E \left[\| \cC \circ(\tilde \cA_{i::}^{\edit{T}}\ast\tilde \cA_{i::})\ast\cX \|^2\right] & \overset{(a)}{\le} n R^2 \m E \left [ \| \cC \circ (\tilde \cA_{i::}^{\edit{T}} \ast \tilde \cA_{i::} ) \| ^2 \right] \\ 
& \overset{(b)}{\le} n R^2 \m E \left [ \|\tilde \cA_{i::}^{\edit{T}} \ast \tilde \cA_{i::} \| ^2 \right] \\ 
& \overset{(c)}{=} n^2 R^2 \m E \left [ \|\tilde \cA_{i::}^{\edit{T}}\|^2 \| \tilde \cA_{i::} \|^2 \right] \\ 
& = n^2 R^2 \m E \left [ \| \tilde \cA_{i::} \|^4 \right] \\
& \overset{(d)}{\le} p n^2 R^2 \m E_i \left[ \| \cA_{i::} \|^4 \right] \\ 
& \le \f {p n^2 R^2} m \sum_{i = 0}^{m-1} \| \cA_{i::} \|^4 . \numberthis \label{eqn:B}
\end{align*} 
Here, (a) and (c) apply Lemma~\ref{lem:sub-multiplicative}, and (b) uses the assumption that entries of $\cC$ are zeros and ones. (d) is the bound 
$$ \m E_{\cD} \left[ \| \tilde \cA_{i::} \| ^4 \right] = \m E_{\cD} \left( \sum_{j, k} \tilde \cA_{ijk}^2 \right)^2 \le p \left ( \sum_{j, k} \cA_{ijk}^2 \right)^2 = p \| \cA_{i::}\|^4.$$

Thus,
\begin{align*}
&\E \left[ \| g(\cX) \|^2 \right]\\
&\le \sum_{i = 0}^{m-1} \f {2 n^2 R^2} {p^3m} \| \cA_{i::} \|^4 + \sum_{i = 0}^{m-1} \f {4 n^{3/2} R} {p^2m} \| \cA_{i::} \|^3 \| \cB_{i::} \| + \sum_{i = 0}^{m-1} \f {2 n}{p^2m} \| \cA_{i::} \|^2 \|\cB_{i::} \|^2 + \f {2 n^2 R^2(1 - p)^2} {p^3m} \sum_{i = 0}^{m-1} \| \cA_{i::} \|^4 \\
& \le \f {4 n^2 R^2} {p^3m} \sum_{i = 0}^{m-1}  \| \cA_{i::} \|^4 +  \f {4 n^{3/2} R} {p^2m} \sum_{i = 0}^{m-1} \| \cA_{i::} \|^3 \| \cB_{i::} \| + \f {2 n}{p^2m} \sum_{i = 0}^{m-1}  \| \cA_{i::} \|^2 \|\cB_{i::} \|^2.
\end{align*} 
\end{proof}

\begin{lemma}\label{lem:G*} Suppose Assumption~\ref{assump:model} holds. Let $g$ be defined as in~\eqref{eqn:g}, and suppose that $\cC$ is a tensor of zeros and ones satisfying~\eqref{eqn:c1} and~\eqref{eqn:c2} for all $0 \le i < m$. Let $R$ be an upper bound on $\|\cX_\star \|$. Then
    $\m E[\|g(\cX_\star)\|^2] \le G_\star,$ where $G_\star = \f {4n^2 R^2}{p^3 m} \sum_{i = 0}^{m-1} \|\cA_{i::}\|^4.$
\end{lemma}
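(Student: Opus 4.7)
The plan is to mimic the proof of Lemma~\ref{lem:G} and exploit the special structure at $\cX_\star$, namely the identity $\cA_{i::} \ast \cX_\star = \cB_{i::}$, which causes the $\cB$-dependent terms to collapse and eliminate the lower-order contributions that appear in the general bound $G$.

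First I would apply the same splitting that led to equation~\eqref{eqn:g^2inequality}, writing
\[
\E[\|g(\cX_\star)\|^2] \le \f{2}{p^4}\,\E\!\left[\|\ctA_{i::}^*\ast(\ctA_{i::}\ast\cX_\star - p\cB_{i::})\|^2\right] + \f{2(1-p)^2}{p^4}\,\E\!\left[\|\cC\circ(\ctA_{i::}^*\ast\ctA_{i::})\ast\cX_\star\|^2\right],
\]
so that I have the two pieces (A) and (B) from the proof of Lemma~\ref{lem:G} to bound separately, now evaluated at the solution $\cX_\star$.

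For piece (A), I would invoke inequality~\eqref{eqn:A} and then inequality~\eqref{eqn:lem5help} (both proven in Lemma~\ref{lem:G} and still applicable), and substitute $\cA_{i::}\ast\cX_\star = \cB_{i::}$ into the right-hand side of~\eqref{eqn:lem5help}. The three $\cB$-dependent terms then telescope as $p^2\|\cB_{i::}\|^2 - 2p^2\|\cB_{i::}\|^2 + p^2\|\cB_{i::}\|^2 = 0$, leaving only the ``variance'' contribution $(p-p^2)\,n\,\|\cA_{i::}\|^2 R^2$. Wrapping this back into~\eqref{eqn:A} gives the clean bound
\[
\E\!\left[\|\ctA_{i::}^*\ast(\ctA_{i::}\ast\cX_\star - p\cB_{i::})\|^2\right] \le \f{n^2 R^2(p-p^2)}{m}\sum_{i=0}^{m-1}\|\cA_{i::}\|^4.
\]
For piece (B), the bound in~\eqref{eqn:B} does not depend on the iterate (only on $R$), so I can reuse it verbatim to get $\f{p\,n^2 R^2}{m}\sum_i\|\cA_{i::}\|^4$.

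Finally I would plug the two pieces into the splitting above and simplify the coefficient
\[
\f{2}{p^4}(p-p^2) + \f{2(1-p)^2}{p^4}\cdot p \;=\; \f{2(1-p)(2-p)}{p^3},
\]
and bound $(1-p)(2-p)\le 2$ on $(0,1)$ to arrive at $G_\star = \f{4n^2 R^2}{p^3 m}\sum_i\|\cA_{i::}\|^4$. The only mildly delicate step is the algebraic cancellation in piece (A) — the rest is routine reuse of bounds already established in Lemma~\ref{lem:G} — so the ``main obstacle'' is really just remembering to apply~\eqref{eqn:lem5help} as an intermediate equality rather than re-running the Cauchy--Schwarz estimates, since otherwise the $\|\cB_{i::}\|$ terms would prevent the sharpening that distinguishes $G_\star$ from $G$.
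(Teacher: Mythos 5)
Your proposal is correct and follows essentially the same route as the paper's proof: reuse the splitting~\eqref{eqn:g^2inequality}, bound (A) via~\eqref{eqn:A} and~\eqref{eqn:lem5help} with the substitution $\cA_{i::}\ast\cX_\star=\cB_{i::}$ killing the $\cB$-terms (the paper phrases this by completing the square to $p^2\|\cA_{i::}\ast\cX-\cB_{i::}\|^2+(p-p^2)n\|\cA_{i::}\|^2R^2$, which is the same cancellation), and reuse~\eqref{eqn:B} unchanged. Your coefficient bookkeeping $(1-p)(2-p)\le 2$ matches the paper's final estimate, so the argument is complete.
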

\begin{proof}

In Lemma~\ref{lem:G} we had bound (A) by $n \E_i \big [\norm{ \cA_{i::} }^2 \E_{\cD} [\|\tilde \cA_{i::}\ast\cX - p \cB_{i::}\|^2] \big ]$ in~\eqref{eqn:A}, and we later bound the $\m E_{\cD}$ term by $$\E_{\cD} [\|\tilde \cA_{i::}\ast\cX - p \cB_{i::}\|^2] \le p^2 \| \cA_{i::} \ast \cX \|^2 + (p - p^2) n  \| \cA_{i::} \|^2 R^2 - 2p^2 \langle \cA_{i::} \ast \cX, \cB_{i::} \rangle + p^2 \|\cB_{i::}\|^2$$ in~\eqref{eqn:lem5help}. Note that we can rewrite this as 
$$ p^2 \| \cA_{i::} \ast \cX \|^2 + (p - p^2) n  \| \cA_{i::} \|^2 R^2 - 2p^2 \langle \cA_{i::} \ast \cX, \cB_{i::} \rangle + p^2 \|\cB_{i::}\|^2 = p^2 \| \cA_{i::} \ast \cX - \cB_{i::} \|^2 + (p - p^2) n \| \cA_{i::} \|^2 R^2.$$

Substituting in $\cX = \cX_\star$, we get $\|\cA_{i::} \ast \cX - \cB_{i::}\| = 0$, so (A) is bounded by 
$$ n \m E_i \left[ \| \cA_{i::}\|^2 (p -p^2) n \|\cA_{i::}\|^2 R^2 \right] \le n^2 R^2 (p - p^2) \m E_i \left[ \| \cA_{i::} \|^4 \| \right].$$
Combining this with~\eqref{eqn:B} in~\eqref{eqn:g^2inequality}, we obtain
\begin{align*}
    \m E[\|g(\cX_\star)\|^2] &\le 
    \f {2n^2 R^2 (1 - p)}{p^3} \m E_i \left[\| \cA_{i::}\|^4 \right] + \f {2n^2 R^2 (1-p)^2}{p^3} \m E_i [\|\cA_{i::}\|^4] \\
    & \le \f {4n^2 R^2 }{p^3} \m E_i [ \| \cA_{i::} \|^4 ] \\ 
    &= \f {4n^2 R^2}{p^3 m} \sum_{i = 0}^{m-1} \|\cA_{i::} \|^4.
\end{align*}
\end{proof}

\begin{lemma}\label{lem:Lg}
Let $g$ be defined as in~\eqref{eqn:g}, and suppose that $\cC$ is a tensor of zeros and ones.
Then, $g$ is Lipschitz continuous, with the supremum of the Lipschitz constant over all possible binary masks bounded above by $L_g := na_{max}^2 / p^2$, where $a_{max}$ is the maximum Frobenius norm of a row slice of $\cA.$
\end{lemma}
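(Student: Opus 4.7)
The plan is to first combine the two summands defining $g$ into a single t-product of the form (coefficient tensor) $\ast (\cX - \cY)$, and then control the Frobenius norm of that t-product using the sub-multiplicative property from Lemma~\ref{lem:sub-multiplicative} and the trivial bound $\|\tilde \cA_{i::}\| \le \|\cA_{i::}\| \le a_{max}$. Starting from the definition~\eqref{eqn:g}, the $\cB_{i::}$ terms cancel in $g(\cX) - g(\cY)$, and factoring out the common occurrence of $\ctA_{i::}^* \ast \ctA_{i::}$ yields
\[
g(\cX) - g(\cY) \;=\; \frac{1}{p^2}\Big[\big(\mathbb{1} - (1-p)\,\cC\big) \circ (\ctA_{i::}^* \ast \ctA_{i::})\Big] \ast (\cX - \cY).
\]

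Next I would use the assumption that $\cC$ is $\{0,1\}$-valued to observe that every entry of $\mathbb{1} - (1-p)\cC$ lies in $\{p, 1\} \subseteq [0,1]$; consequently, elementwise multiplication by this coefficient tensor never increases the Frobenius norm, so $\|(\mathbb{1} - (1-p)\cC) \circ (\ctA_{i::}^* \ast \ctA_{i::})\| \le \|\ctA_{i::}^* \ast \ctA_{i::}\|$. Applying Lemma~\ref{lem:sub-multiplicative} once to the outer t-product with $\cX - \cY$ and once again to $\ctA_{i::}^* \ast \ctA_{i::}$, and then using $\|\tilde \cA_{i::}\| \le \|\cA_{i::}\| \le a_{max}$, the two $\sqrt{n}$ factors combine to give
\[
\|g(\cX) - g(\cY)\| \;\le\; \frac{1}{p^2}\,\sqrt{n}\,\|\ctA_{i::}^* \ast \ctA_{i::}\|\,\|\cX - \cY\| \;\le\; \frac{n\,a_{max}^2}{p^2}\,\|\cX - \cY\|.
\]
Since this bound is independent of which row index $i$ was drawn and of the realization of the binary mask $\cD$, the supremum of the Lipschitz constant over all masks is at most $L_g = n a_{max}^2 / p^2$, as claimed.

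The main obstacle I anticipate is that a naive triangle-inequality split of the two summands in $g$ before bounding produces the strictly worse constant $(2-p)\,n a_{max}^2/p^2$ rather than $n a_{max}^2/p^2$. Avoiding that loss requires the algebraic regrouping in the first step, so that it is the single Hadamard coefficient $\mathbb{1} - (1-p)\cC$ (rather than each term separately) that interacts with $\ctA_{i::}^* \ast \ctA_{i::}$. This is precisely where the hypothesis $\cC \in \{0,1\}^{\ell \times \ell \times n}$ is used: it guarantees the coefficient entries never exceed $1$, so the contractive elementwise step works and the bound reduces cleanly to the sub-multiplicative norm of $\ctA_{i::}^* \ast \ctA_{i::}$.
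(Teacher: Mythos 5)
Your argument is correct and follows essentially the same route as the paper's proof: both write $g(\cX)-g(\cY)$ as a single t-product of the coefficient tensor $\frac{1}{p^2}\left(\ctA_{i::}^*\ast\ctA_{i::}-(1-p)\,\cC\circ(\ctA_{i::}^*\ast\ctA_{i::})\right)$ with $\cX-\cY$, use the $\{0,1\}$-valuedness of $\cC$ to bound this coefficient's norm by $\|\ctA_{i::}^*\ast\ctA_{i::}\|$, and then apply Lemma~\ref{lem:sub-multiplicative} twice together with $\|\ctA_{i::}\|\le\|\cA_{i::}\|\le a_{max}$ to obtain $L_g = n a_{max}^2/p^2$ uniformly over $i$ and the mask. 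Your explicit Hadamard factorization via $\mathbb{1}-(1-p)\cC$ is just a slightly more formal phrasing of the same step the paper performs.
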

\begin{proof}
We have 
\begin{align*}
\|g(\cX) - g(\cY)\| &= \bigg \| \left(\frac{1}{p^2} \ctA_{i::}^{\edit{T}}\ast\ctA_{i::} - \frac{1-p}{p^2}\cC \circ (\ctA_{i::}^{\edit{T}}\ast\ctA_{i::})\right) \ast (\cX - \cY) \bigg \| \\ 
 &\le \f {\sqrt n}{p^2} \left \| \ctA_{i::}^{\edit{T}}\ast\ctA_{i::} - (1-p)\cC \circ (\ctA_{i::}^{\edit{T}}\ast\ctA_{i::}) \right \| \left \| \cX - \cY \right \| \\
&\le \f {\sqrt{n}}{p^2} \left \| \tilde \cA_{i::}^{\edit{T}} \ast\tilde \cA_{i::} \right \| \left \| \cX - \cY \right \| \\
&\le \f n {p^2} \| \tilde \cA_{i::}\|^2 \| \cX - \cY\| \\
&\le \f n {p^2} \| \cA_{i::} \|^2 \| \cX - \cY \| \\
&\le \f {n a_{max}^2}{p^2} \|\cX - \cY\|.
\end{align*}
The first and third inequalities apply Lemma~\ref{lem:sub-multiplicative}, while the second inequality uses the assumption that the entries of $\cC$ are zeros and ones.
\end{proof}

The next lemma describes the strongly convex parameter of the objective function in terms of $\bdiag{\widehat \cA},$ where $\widehat \cA$ is the tensor obtained by applying the discrete Fourier transform to the tubes of $\cA,$ and $\bdiag{\cdot}$ returns the block diagonal matrix formed by the frontal slices of the input tensor. A \textit{tube} of a tensor is obtained by fixing the first two indices and varying the third index. We view tubes as (column) vectors in $\m R^n$. We denote the $(i,j)$th tube of $\cA$ as $\cA_{ij:}$.
\begin{lemma}[\cite{Ma2025Stocha} Lemma 7]\label{lem:strongconvex} Recall our objective function $F(\cX) = \f 1 {2m} \| \cA \ast \cX - \cB\|^2.$ Let $\cal A = \cal U  \ast\cal S \ast \cal V^{\edit{T}}$ be the T-SVD of $\cal A$ \cite{KILMER2011641}. Assume that $\cA$ is tall, i.e. $m > \ell.$ Let $\sigma_{min}$ be the smallest singular value of $\bdiag{\widehat \cA}$. Then $F$ is $\sigma_{min}^2/m$-strongly convex.
\end{lemma}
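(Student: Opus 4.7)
The plan is to reduce the strong convexity of $F$ to a minimum singular value bound on the t-product operator, and then transfer that bound to $\bdiag{\widehat \cA}$ via the classical block-diagonalization of block-circulant matrices under the discrete Fourier transform.

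First I would note that $F(\cX) = \tfrac{1}{2m}\|\cA \ast \cX - \cB\|^2$ is quadratic in $\cX$. Expanding the squared norm and using the adjoint property $\langle \cA \ast \cX, \cY\rangle = \langle \cX, \cA^* \ast \cY\rangle$ of the t-product shows that the Hessian of $F$, viewed as a linear operator on tensors with the Frobenius inner product, is $\tfrac{1}{m}\cA^* \ast \cA$. Consequently $F$ is $\mu$-strongly convex if and only if $\langle \cX, (\cA^* \ast \cA) \ast \cX\rangle = \|\cA \ast \cX\|^2 \geq m\mu\,\|\cX\|^2$ for every $\cX$. So the entire problem reduces to showing $\|\cA \ast \cX\|^2 \geq \sigma_{\min}^2 \|\cX\|^2$.

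Next I would rewrite the left-hand side using the block-circulant representation, $\|\cA \ast \cX\|^2 = \|\bcirc{\cA}\,\unfold{\cX}\|_F^2$, and invoke the classical fact that $\bcirc{\cA} = \tfrac{1}{n}(F_n^* \otimes I_m)\,\bdiag{\widehat \cA}\,(F_n \otimes I_\ell)$, where $F_n$ is the unnormalized $n\times n$ DFT matrix. The block structure of the Kronecker product, together with the identity $(F_n \otimes I_\ell)\unfold{\cX} = \unfold{\widehat \cX}$, turns this into $\bcirc{\cA}\,\unfold{\cX} = \tfrac{1}{n}(F_n^* \otimes I_m)\,\bdiag{\widehat \cA}\,\unfold{\widehat \cX}$. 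Since $F_n/\sqrt n$ is unitary, one verifies $\|(F_n^* \otimes I_m)Y\|_F^2 = n\|Y\|_F^2$, which yields $\|\cA \ast \cX\|^2 = \tfrac{1}{n}\|\bdiag{\widehat \cA}\,\unfold{\widehat \cX}\|_F^2$. The tallness assumption $m>\ell$ enters precisely here, since it is what lets $\bdiag{\widehat \cA}$ be tall and hence potentially full column rank with $\sigma_{\min}>0$.

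To close the argument, the spectral definition of the smallest singular value gives $\|\bdiag{\widehat \cA}\,\unfold{\widehat \cX}\|_F^2 \geq \sigma_{\min}^2\,\|\unfold{\widehat \cX}\|_F^2 = \sigma_{\min}^2 \|\widehat \cX\|^2$, and Parseval for the DFT gives $\|\widehat \cX\|^2 = n\|\cX\|^2$ because $F_n^* F_n = nI$. Chaining these produces $\|\cA \ast \cX\|^2 \geq \tfrac{\sigma_{\min}^2}{n}\cdot n\|\cX\|^2 = \sigma_{\min}^2 \|\cX\|^2$, as required. The main obstacle I anticipate is not conceptual but bookkeeping: the factors of $n$ and $\sqrt n$ arising from $F_n$ versus its unitary normalization $F_n/\sqrt n$, together with the specific DFT convention used to define $\widehat{\cdot}$, must cancel exactly, and a misplaced factor anywhere would corrupt the final strong convexity constant $\sigma_{\min}^2/m$.
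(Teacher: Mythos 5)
Your argument is correct. Note, however, that this paper does not actually prove the lemma---it is imported verbatim as Lemma~7 of \cite{Ma2025Stocha}---so there is no in-paper proof to compare against; your write-up is a sound self-contained substitute. The reduction is exactly right: for the quadratic $F$, $\mu$-strong convexity is equivalent to $\frac 1m\|\cA\ast\cX\|^2\ge\mu\|\cX\|^2$ for all $\cX$, and your Fourier-domain chain $\|\cA\ast\cX\|^2=\|\bcirc{\cA}\unfold{\cX}\|_F^2=\frac 1n\|\bdiag{\widehat\cA}\,\unfold{\widehat\cX}\|_F^2\ge\frac{\sigma_{min}^2}{n}\|\widehat\cX\|^2=\sigma_{min}^2\|\cX\|^2$ has the normalization factors of the unnormalized DFT ($\bcirc{\cA}=\frac 1n(F_n^*\otimes I_m)\bdiag{\widehat\cA}(F_n\otimes I_\ell)$, $F_n^*F_n=nI$) cancelling exactly as you anticipated, so the constant $\sigma_{min}^2/m$ comes out correctly. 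You also correctly identify where tallness ($m>\ell$) is used: each block $\widehat\mA_k$ is tall, so $\|\widehat\mA_k\widehat\mX_k\|_F\ge\sigma_{min}(\widehat\mA_k)\|\widehat\mX_k\|_F$ is legitimate and $\sigma_{min}(\bdiag{\widehat\cA})=\min_k\sigma_{min}(\widehat\mA_k)$ can be positive. One stylistic observation: the statement's mention of the T-SVD $\cA=\cU\ast\cS\ast\cV^*$ plays no role in your proof, and indeed it is not needed---the singular values of $\bdiag{\widehat\cA}$ are all that matter---so your route is, if anything, slightly leaner than the phrasing of the lemma suggests.
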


\begin{lemma}
Let $g$ be defined as in~\eqref{eqn:g}, and suppose that $\cC$ is Hermitian. Then, there exists a smooth $f$ such that $\nabla f = g.$
    \label{lem:Gver}
\end{lemma}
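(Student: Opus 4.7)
The plan is to split $g$ into an affine piece and a purely linear (in $\cX$) piece and antidifferentiate each separately. Write $g = g_1 - g_2$ with
$$g_1(\cX) := \f{1}{p^2}\, \ctA_{i::}^* \ast (\ctA_{i::} \ast \cX - p\, \cB_{i::}), \qquad g_2(\cX) := \f{1-p}{p^2}\, (\cC \circ (\ctA_{i::}^* \ast \ctA_{i::})) \ast \cX.$$
The first piece should be the gradient of the natural quadratic $f_1(\cX) := \f{1}{2p^2}\| \ctA_{i::}\ast \cX - p\,\cB_{i::}\|^2$. To verify this I would identify $\cX$ with $\unfold{\cX}$; then $f_1$ becomes the standard least-squares quadratic $\f{1}{2p^2}\|\bcirc{\ctA_{i::}} \unfold{\cX} - p\unfold{\cB_{i::}}\|^2$, whose gradient in the unfolded variable is $\f{1}{p^2}\bcirc{\ctA_{i::}}^T (\bcirc{\ctA_{i::}} \unfold{\cX} - p\unfold{\cB_{i::}})$. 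Folding back, using the identity $\bcirc{\cA}^T = \bcirc{\cA^*}$ (both are block-circulant with common first block-row $\mA_0^T, \mA_1^T, \ldots, \mA_{n-1}^T$), recovers $g_1$ exactly.

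The substantive step will be antidifferentiating $g_2$. Let $\cM := \cC \circ (\ctA_{i::}^* \ast \ctA_{i::})$. In general, the quadratic $h(\cX) := \tfrac{1}{2}\langle \cM \ast \cX, \cX\rangle$ has gradient $\tfrac{1}{2}(\cM + \cM^*) \ast \cX$, which collapses to $\cM \ast \cX$ precisely when $\cM$ is Hermitian. Establishing this Hermitian symmetry is the main (and only) obstacle, and it is exactly what forces the hypothesis that $\cC$ is Hermitian. Two facts finish it: (i) $\ctA_{i::}^* \ast \ctA_{i::}$ is always Hermitian, by the routine t-product identity $(\cA \ast \cB)^* = \cB^* \ast \cA^*$; and (ii) the Hadamard product of two Hermitian tensors is Hermitian. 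Fact (ii) follows instantly from the entrywise rule $\cA^*_{ijk} = \cA_{jik'}$ (with $k' = 0$ if $k = 0$ and $k' = n-k$ otherwise), which gives $(\cA \circ \cB)^* = \cA^* \circ \cB^*$ term by term, so that Hermitian symmetry is preserved under $\circ$.

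Combining (i), (ii), and the Hermitian hypothesis on $\cC$ shows $\cM$ is Hermitian, so $g_2$ is the gradient of $\f{1-p}{2p^2}\langle \cM \ast \cX, \cX\rangle$. Setting
$$f(\cX) := \f{1}{2p^2}\|\ctA_{i::}\ast\cX - p\cB_{i::}\|^2 - \f{1-p}{2p^2}\langle \cM \ast \cX, \cX\rangle$$
then produces a smooth (indeed quadratic in $\cX$) function with $\nabla f = g_1 - g_2 = g$, completing the construction. The overall shape of the proof is clean; the only delicate point, and the reason the hypothesis enters as stated in Theorem~\ref{thm:fixedstepsize}, is the symmetry check in the previous paragraph.
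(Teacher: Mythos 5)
Your proof is correct, but it takes a genuinely different route from the paper's. You antidifferentiate the correction term in one stroke by writing it as the gradient of the quadratic form $\f{1-p}{2p^2}\langle \cM \ast \cX, \cX\rangle$ with $\cM = \cC \circ (\ctA_{i::}^*\ast\ctA_{i::})$, observing that in unfolded coordinates this gradient is $\f{1-p}{2p^2}(\bcirc{\cM}+\bcirc{\cM}^T)\unfold{\cX}$ and that $\bcirc{\cM}^T=\bcirc{\cM^*}$, so symmetry of the form reduces to $\cM$ being Hermitian; that in turn follows from $(\cA\ast\cB)^*=\cB^*\ast\cA^*$ together with the entrywise identity $(\cA\circ\cB)^*=\cA^*\circ\cB^*$, which is where the hypothesis on $\cC$ enters. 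The paper instead insists on building $f$ only out of terms of the form $\f12\|\cQ\ast\cX\|^2$, whose gradients are given by its identity~\eqref{eqn:grad}: it decomposes each frontal slice of $\cH=\cC\circ(\ctA_{i::}^*\ast\ctA_{i::})$ into outer products, assembles auxiliary tensors $\cT^{(j,k)}$ whose Gram tensors reproduce the nonzero frontal slices of $\cH$ (with a separate signed decomposition $\mH_0=\sum_j \alpha_j u_j u_j^*$ to fix up the zeroth slice), and takes a signed combination of the resulting squared norms; the Hermitian hypothesis is used there to pair the $k$th and $(n-k)$th slices. Your argument is shorter and makes the role of the Hermitian condition transparent (it is exactly what collapses $\f12(\cM+\cM^*)\ast\cX$ to $\cM\ast\cX$), at the cost of verifying the gradient of a general tensor quadratic form rather than only least-squares terms; the paper's explicit construction avoids that verification but is considerably more involved. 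Both yield a smooth (quadratic) $f$ with $\nabla f = g$.
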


\begin{proof}


    Recall that 
    \begin{equation} 
    \nabla( \f 1 2 \|\cQ\ast\cX - \cR\|^2) = \cQ^{\edit{T}}\ast(\cQ\ast\cX - \cR) \label{eqn:grad}
    \end{equation}
    for any tensors $\cQ, \cR$ independent of $\cX$. Hence 
    \begin{equation*}
    \nabla \left(\f 1 {2p^2} \|\tilde \cA_{i::}\ast\cX - p \cB_{i::} \|^2\right) = 
    \f 1 {p^2} \tilde \cA_{i::}^{\edit{T}}\ast(\tilde \cA_{i::} \ast\cX - p \cB_{i::}),
    \end{equation*}
    which is the first term in $g$. Therefore, it suffices to show that there exists a function $f$ with 
    \begin{equation}\nabla f(\cX) = \cC \circ (\tilde \cA_{i::}^{\edit{T}} \ast \tilde \cA_{i::}) \ast \cX.\label{eqn:lemfgrad}
    \end{equation}
    We will define $f$ as the sum of terms of the form $\f 1 2 \| \cQ \ast \cX\|^2$ and use~\eqref{eqn:grad} to compute the gradient of $f$.
    
     To that end, let $\cH = \cC \circ (\tilde \cA_{i::}^{\edit{T}} \ast \tilde \cA_{i::}).$ We will decompose $\cH$ and define tensors based on the decomposition to construct such an $f$. Because $\cC$ and $\tilde \cA_{i::}^{\edit{T}} \ast \tilde \cA_{i::}$ are Hermitian, $\cH$ is also Hermitian, so we only decompose the first half of the frontal slices. For each integer $1 \le k < n / 2$, write the $k$th frontal slice
    $$ \mH_k = \sum_{j = 0}^{N_k} u_{jk} v_{jk}^{\edit{T}}$$ as sum of outer products of vectors in $\m R^{\ell \times 1}$. For the special case $k =  n /2$ (which occurs if $n$ is even), we will instead define $u_{jk}$ and $v_{jk}$ such that 
    $$ \mH_{n/2} = \sum_{j = 0}^{N_{n/2}} u_{jk} v_{jk}^{\edit{T}} + v_{jk} u_{jk}^{\edit{T}}.$$
    Note that such a decomposition must exist because $\mH_k$ is a Hermitian matrix.
    
    
    For each $0 \le j \le  N_k$, we will construct a tensor $\cT^{(j,k)} \in \m R^{1 \times \ell \times n}$ corresponding to $u_{jk}$ and $v_{jk}$ by defining its frontal slices. Since the frontal slices are elements of $\m R^{1 \times \ell}$, we think of them as (row) vectors, and we denote them with the lowercase vector notation $t_{k'}^{(j, k)}$, where $0 \le k' < n.$ We define
    $$ t_{k'}^{(j, k)} = \begin{cases}
        u_{jk}^{\edit{T}}, & k' = 0 \\ 
        v_{jk}^{\edit{T}}, & k' = k \\ 
        0, & \text{otherwise}
    \end{cases}$$
    Then 
    \begin{align*}
     \cT^{(j, k)\edit{T}}\ast\cT^{(j,k)} = 
    \text{fold} \begin{pmatrix}
        t^{(j,k)\edit{T}}_0 t^{(j,k)}_0 + t^{(j,k)\edit{T}}_1 t^{(j,k)}_1 + \dots + t^{(j,k)\edit{T}}_{n-1} t^{(j,k)}_{n-1} \\ t^{(j,k)\edit{T}}_{n-1} t^{(j,k)}_0 + t^{(j,k)\edit{T}}_0 t^{(j,k)}_1 + \dots + t^{(j,k)\edit{T}}_{n-2} t^{(j,k)}_{n-1} \\ 
        \vdots \\ 
        t^{(j,k)\edit{T}}_1 t^{(j,k)}_0 + t^{(j,k)\edit{T}}_2 t^{(j, k)}_1 + \dots + t^{(j,k)\edit{T}}_0 t^{(j,k)}_{n-1}
    \end{pmatrix}
\end{align*}
satisfies, for each $0 \le k' < n$, 

$$
  (\cT^{(j, k)\edit{T}}\ast\cT^{(j,k)})_{k'} = \begin{cases} 
    u_{jk} v_{jk}^{\edit{T}}, & k' = k, k \neq n/2\\ 
    (u_{jk} v_{jk}^{\edit{T}})^{\edit{T}}, & k' = n-k, k \neq n/2 \\ 
    u_{jk} v_{jk}^{\edit{T}} + v_{jk} u_{jk}^{\edit{T}}, & k' = k, k = n/2 \\ 
    u_{jk} u_{jk}^{\edit{T}} + v_{jk} v_{jk}^{\edit{T}}, & k' = 0\\ 
    0, & \text{otherwise.}
    \end{cases}
$$
Therefore, summing across all $0 \le j \le N_k,$
\begin{align*}
 \left ( \sum_{j = 0}^{N_k} \cT^{(j, k)\edit{T}}\ast\cT^{(j,k)}\right)_{k'} &= \begin{cases} 
    \mH_k = \sum_{j = 0}^{N_k} u_{jk} v_{jk}^{\edit{T}} , & k' = k, k \neq n/2 \\[5pt]
   \mH_k^{\edit{T}} =  \sum_{j = 0}^{N_k} (u_{jk} v_{jk}^{\edit{T}})^{\edit{T}} , & k' = n-k, k \neq n/2 \\[5pt]
    \mH_{n/2} = \sum_{j = 0}^{N_k} u_{jk} v_{jk}^{\edit{T}} + v_{jk} u_{jk}^{\edit{T}} , & k' = k, k = n/2 \\[5pt]
    \sum_{j = 0}^{N_k} u_{jk} u_{jk}^{\edit{T}} + v_{jk} v_{jk}^{\edit{T}}, & k' = 0\\ 
    0, & \text{otherwise.}
    \end{cases} \
\end{align*} 
Now, because $\cH$ is Hermitian, the tensor $\sum_{k = 1}^{\lfloor n / 2 \rfloor} \sum_{j = 0}^{N_k} \cT^{(j, k)\edit{T}}\ast\cT^{(j,k)}$ has frontal slices exactly matching those of $\cH$, except with zeroth frontal slice equal to $\sum_{k = 1}^{\lfloor n /2 \rfloor} \sum_{j = 0}^{N_k} u_{jk} u_{jk}^{\edit{T}} + v_{jk} v_{jk}^{\edit{T}}$. That is,
$$ \sum_{k = 1}^{\lfloor n /2 \rfloor} \sum_{j = 0}^{N_k} \cT^{(j, k)\edit{T}}\ast\cT^{(j,k)} - \sum_{k = 1}^{\lfloor n /2 \rfloor} \sum_{j = 0}^{N_k} (u_{jk}^{\edit{T}})^{\edit{T}} \ast u_{jk}^{\edit{T}} + (v_{jk}^{\edit{T}})^{\edit{T}} \ast  v_{jk}^{\edit{T}} = \cH - \cH_{::0},$$
where $u_{jk}^{\edit{T}}$ (resp. $v_{jk}^{\edit{T}}$) here is understood to be the $1 \times \ell \times n$ tensor with zeroth frontal slice equal to the vector $u_{jk}^{\edit{T}}$ (resp. $v_{jk}^{\edit{T}})$ and all other frontal slices equal to zero.

Thus, by~\eqref{eqn:grad}, 
$$\nabla \left [ \sum_{k = 1}^{\lfloor n / 2 \rfloor} \sum_{j = 0}^{N_k} \f 1 2 \| \cT^{(j, k)} \ast \cX \|^2  - \sum_{k = 1}^{\lfloor n /2 \rfloor} \sum_{j = 0}^{N_k} \left( \f 1 2 \| u_{jk}^{\edit{T}} \ast \cX \|^2 + \f 1 2 \| v_{jk}^{\edit{T}} \ast \cX \|^2 \right) \right] = \cH \ast \cX - \cH_{::0} \ast \cX.$$

To show~\eqref{eqn:lemfgrad}, it now suffices to construct an $f$ such that 
\begin{equation} \nabla f(\cX) = \cH_{::0} \ast \cX.\label{eqn:lemfgrad2}\end{equation}

Observe that there exist vectors $\{u_{j}\}_{0 \le j \le N_0}$ with choices of signs $\alpha_j \in \{\pm 1\}$ such that $\mH_0$ can be written as $\mH_0 = \sum_{j = 0}^{N_0} \alpha_j u_{j} u_{j}^{\edit{T}}$. Indeed, $\mH_0$ is Hermitian, so each off-diagonal entry $(\mH_0)_{xy}$ can be obtained by choosing a $u_{j}$ with nonzero entries at indices $x$ and $y$ and zeros elsewhere. Then, the diagonal entries can be obtained by choosing a $u_j$ with one nonzero entry and by choosing an appropriate sign $\alpha_j$.

Finally, note that the choice
$$ f(\cX) := \sum_{j = 0}^{N_0} \f {\alpha_j} 2 \| u_{j}^{\edit{T}} \ast \cX\|^2$$ 
satisfies~\eqref{eqn:lemfgrad2}, where $u_j^{\edit{T}}$ is understood to be the $1 \times \ell \times n$ tensor with zeroth frontal slice equal to the vector $u_j^{\edit{T}}$ and all other frontal slices equal to zero.
\end{proof} 
\end{document}